\newcommand{\eg}{{e.g.}}
\newcommand{\ie}{{i.e.}\ }
\newcommand{\cO}{{\mathcal O}}
\newcommand{\bbZ}{{\mathbb{Z}}}
\newcommand{\half}{\frac{1}{2}}
\newcommand{\quadtext}[1]{\quad\text{#1}\quad}
\newcommand{\qquadtext}[1]{\qquad\text{#1}\qquad}
\newcommand{\restr}[1]{_{|_{\scriptstyle #1}}}
\newcommand{\equalby}[1]{\overset{\textrm{#1}}=}
\newcommand{\equalbydef}{\equalby{def}}
\newcommand{\equalbyref}[1]{\equalby{\ref{#1}}}
\newcommand{\too}{\longrightarrow}
\newcommand{\tooby}[1]{\buildrel #1\over\longrightarrow}
\newcommand{\tooo}[1]{\mathop{\vcenter{\hbox to #1em{\hrulefill}}\kern-5pt\to}}
\newcommand{\hook}{\hookrightarrow}
\newcommand{\oto}[1]{\overset{#1}\to}
\newcommand{\otoo}[1]{\overset{#1}\too}
\newcommand{\isoto}{\oto{\sim}}
\newcommand{\isotoo}{\otoo{\sim}}
\newcommand{\potimes}[1]{^{\otimes #1}}
\newcommand{\inv}{^{-1}}
\DeclareMathOperator{\im}{\mathrm{im}} 
\DeclareMathOperator{\id}{\mathrm{id}} 
\DeclareMathOperator{\coker}{\mathrm{coker}} 
\DeclareMathOperator{\Wcoh}{W}
\DeclareMathOperator{\Wper}{W}
\newcommand{\Wtot}{\Wcoh^{\text{\rm tot}}}
\newcommand{\bord}{\partial} 
\DeclareMathOperator{\ext}{e} 
\newcommand{\Gm}{\mathbb{G}_{\text{\rm m}}}
\DeclareMathOperator{\Grass}{\mathrm{Gr}}    
\DeclareMathOperator{\Pic}{\mathrm{Pic}} 
\newcommand{\can}{\omega} 
\newcommand{\BS}{X} 
\newcommand{\CS}{Y} 
\newcommand{\per}[1]{{\rm per}_{#1}} 
\newcommand{\weq}{\mathop{\,\leftrightsquigarrow\,}} 
\newcommand{\SmPic}{{\mathcal S}} 
\newcommand{\dt}[1]{\mathchoice{\mathop{\cdot}\limits_{#1}}{\cdot_{#1}}{}{}} 
\newcommand{\lbi}[1]{\underline{#1}} 
\DeclareMathOperator{\Alignname}{Al}
\newcommand{\Al}{\Alignname}
\newcommand{\Alcat}{\mathcal{A}\ell}
\DeclareMathOperator{\pullname}{Pull}
\DeclareMathOperator{\pushname}{Push}
\newcommand{\alis}[1]{{#1}{}^{{\circlearrowleft}}}
\newcommand{\alto}{\leadsto}
\newcommand{\alKto}[1]{\ensuremath\raisebox{.5ex}{$\underset{#1}{\leadsto}$}}
\newcommand{\pull}[2]{\pullname_{#2,#1}}
\newcommand{\push}[2]{\pushname_{#1,#2}}
\newcommand{\Pull}[2]{\pull{\displaystyle #1}{\displaystyle #2}}
\newcommand{\Push}[2]{\push{\displaystyle #1}{\displaystyle #2}}
\newcommand{\Set}{\mathcal{I}} 
\newcommand{\varS}{i} 
\newcommand{\FSet}{\mathcal{J}} 
\newcommand{\varF}{i} 
\newcommand{\SetZ}{\mathcal{I}} 
\newcommand{\varSZ}{i} 
\newcommand{\SetX}{\mathcal{J}} 
\newcommand{\varSX}{j} 
\newcommand{\SetU}{\mathcal{K}} 
\newcommand{\varSU}{k} 
\theoremstyle{plain}
\newtheorem{theo}{Theorem}[section]
\newtheorem*{maintheo*}{Main Theorem}
\newtheorem{lemm}[theo]{Lemma}
\newtheorem{coro}[theo]{Corollary}
\newtheorem{prop}[theo]{Proposition}
\theoremstyle{definition}
\newtheorem{defi}[theo]{Definition}
\newtheorem{rema}[theo]{Remark}
\newtheorem{setup}[theo]{Setup}
\newtheorem{exam}[theo]{Example}
\newtheorem{nota}[theo]{Notation}
\newtheorem*{conv*}{Convention}
\title{Bases of total Witt groups and lax-similitude}
\author{Paul Balmer and Baptiste Calm{\`e}s}
\address{Paul Balmer, Department of Mathematics, UCLA, Los Angeles, CA 90095-1555, USA}
\urladdr{http://www.math.ucla.edu/\raise-.5ex\hbox{~}balmer}
\address{Baptiste Calm\`es, Universit\'e d'Artois, Laboratoire de Math\'ematiques de Lens, France}
\urladdr{http://www.math.uni-bielefeld.de/\raise-.5ex\hbox{~}bcalmes}
\thanks{The first author is supported by NSF grant DMS-0969644.}
\date{\today}
\keywords{total Witt group, triangulated category}
\subjclass{19G99, 11E81, 18E30}
\begin{document}
\bibliographystyle{amsplain}

\begin{abstract}
We explain how to work with total Witt groups, more specifically,
how to circumvent the classical embarrassment of making choices for
line bundles up to isomorphisms and up to squares.
\end{abstract}

\maketitle

\tableofcontents


\section*{Introduction}


After completing our computation of the total Witt group
$\Wtot(\Grass(d,n))$ of Grassmannians in~\cite{Balmer07_pre} and
submitting that paper for publication, a somewhat rigid anonymous
reader insisted on the problem that the total Witt group of a
scheme~$\CS$
$$
\Wtot(\CS)=\bigoplus_{[L]\in\Pic(\CS)/2}\Wcoh^*(\CS,L)
$$
did not formally exist, because the group $\Wcoh^*(\CS,L)$ depends on
a choice of a representative $L$ of $[L]\in\Pic(\CS)/2$ up to
\emph{non-unique} isomorphism. Although formally correct, it might
have occurred to a less rigid or less anonymous human being that
this problem had nothing to do with Grassmannians per se, neither
much to do with triangular Witt groups, and exists since the
foundations of Witt groups of schemes themselves, starting with Knebusch~\cite{Knebusch77b}.

For us, two alternatives presented themselves, beyond cowardly
withdrawing the paper. First, we could trace all twists in use in
the special case of Grassmannians, basically turning a reasonably
short and hopefully readable paper about Witt groups into an obscure
mess about line bundles. Or, alternatively, we could write another
paper in which we \emph{prove} that the choices do not really
matter, for Grassmannians and much more generally, providing a tool
for seven generations of Witt groupists to use happily ever after.
For some reason, we went for the second alternative. The outcome is
what the reader holds in her electronic hands.

\begin{conv*}
Throughout the paper, $\BS$ and $\CS$ stand for regular noetherian
separated schemes over~$\bbZ[\half]$, of finite Krull dimension.
(See however Remark~\ref{coherent_rema}.)
\end{conv*}

For $i\in\bbZ$ and for a line bundle~$L$ over~$\CS$, the Witt group
$\Wcoh^i_{\!Z}(\CS,L)$ is the triangular Witt group of the derived
category $\textrm{D}^{\textrm{b}}_Z(\textrm{VB}(\CS))$ of bounded
complexes of vector bundles over~$\CS$ with homology supported in a
closed subset $Z\subset \CS$, with respect to the usual duality
derived from $\mathcal{H}\textit{om}_{\cO_\CS}(-,L)$ and shifted $i$
times. See~\cite{Balmer00, Balmer01}. Of course, $\Wcoh^i(\CS,L)$
stands for $\Wcoh^i_\CS(\CS,L)$. The line bundle~$L$ is often called
\emph{the twist} of the duality.

These Witt groups are periodic in two elementary ways. First
they are $4$-periodic in shift, \ie there is an extremely natural
isomorphism
$$\Wcoh^i_{\!Z}(\CS,L) \cong \Wcoh^{i+4}_{\!Z}(\CS,L)$$
by \cite[Proposition 2.14]{Balmer00}.
On the other hand, we have a product
$$\Wcoh^{i_1}_{\!Z_1}(\CS,L_1) \times \Wcoh^{i_2}_{\!Z_2}(\CS,L_2) \to \Wcoh^{i_1+i_2}_{\!Z_1\cap Z_2}(\CS,L_1 \otimes L_2)$$
by \cite{Gille03} that recovers the usual multiplication on the
classical Witt group $\Wper(\CS)=\Wcoh^0(\CS,\cO_\CS)$ and that turns any
$\Wcoh^i_{\!Z}(\CS,L)$ into a $\Wper(\CS)$-module. For any line bundle
$M$ over~$\CS$, this product yields the other periodicity isomorphism,
called the square-periodicity isomorphism,
\begin{equation}
\label{Per_eq}%
\Wcoh^i_{\!Z}(\CS,L) \isotoo \Wcoh^i_{\!Z}(\CS, M^{\otimes 2}\otimes L)
\end{equation}
and given by multiplication with the Witt class $[M \isoto M^{\vee}
\otimes M^{\otimes 2}]\in\Wcoh^0(\CS,M^{\otimes 2})$, where $M^{\vee}$
denotes the dual of~$M$. Finally, any isomorphism $L\isoto L'$
induces isomorphisms $\Wcoh^i_{\!Z}(\CS,L) \isotoo
\Wcoh^i_{\!Z}(\CS,L')$ in the obvious way.

From these isomorphisms, it is clear that all the Witt groups of a
scheme can be recovered once we know the Witt groups
$\Wcoh^i_{\!Z}(\CS,L)$ for $L$ and $i$ varying in a set of
representatives of $\Pic(\CS)/2$ and $\bbZ/4$ respectively. The
direct sum of such groups is what is usually called ``the'' total
Witt group of the scheme~$\CS$ (with support in~$Z$). However, this
total group is not canonical since it involves the choice of a line
bundle $L$ for every class in $\Pic(\CS)/2$. Furthermore, if we want
to turn this total Witt group into a ring, using the above product,
we need to choose isomorphisms between $L_1\otimes L_2$ and the
chosen line bundle representing $[L_1\otimes L_2]$ in $\Pic(\CS)/2$,
including the choice of ``square roots'' (for the periodicity modulo
2), and so on. All these choices should further satisfy some
compatibilities, of the highest sex appeal. Last not least, it is
unclear how to make such choices in a functorial way, when varying
the scheme, under pull-back and under push-forward.

To circumvent such technical obstacles, we propose a way of keeping
the intuitive simplification allowed by the total Witt group, yet
avoiding the unpleasant use of a non-canonical object. Of course, it
is unfortunately not true in full generality that one can completely
ignore choices. Here, we provide a large class $\SmPic_\BS$ of
schemes $\CS$ over a given base~$\BS$ (Definition~\ref{defi:SmPic}),
in which it makes rigorous sense to say that a collection of Witt
classes over~$\CS$ form what we usually call a ``basis of
$\Wtot(\CS)$ over $\Wtot(\BS)$".

The initial concept is that of \emph{alignment} $A:L_1\alto L_2$
between line bundles (Section~\ref{se:align}) and the
\emph{alignment isomorphism}
$$
\alis{A} \,: \ \Wcoh^*(\CS,L_1)\isoto \Wcoh^*(\CS,L_2)
$$
induced on Witt groups (Section~\ref{se:al-W}). We show how these
interact with the two functorialities of Witt theory\,: pull-back
and conditional push-forward. This leads us to introduce lax
pull-backs and lax push-forwards (Section~\ref{se:gen-pp}) which are
heuristically pull-backs and push-forwards \emph{in which one only
cares about twists in~$\Pic/2$}. This mathematical peace of mind is
formally provided by Theorem~\ref{thm:desc}, where we show that a
change of line bundle in some $\bar \CS$ over $\CS$ can be descended
to~$\CS$, as long as both $\bar \CS$ and $\CS$ belong to our
category~$\SmPic_\BS$.

In Sections~\ref{se:mock} and~\ref{se:bases}, we generalize the
action of ``$\Wtot(\BS)$" on ``$\Wtot(\CS)$" for $\CS\in\SmPic_\BS$ by again
allowing alignments to move classes around. In this context, we
discuss the notion of ``total basis".

We then show how the fundamental geometric results, localization
long exact sequence, homotopy invariance and d\'evissage, behave
with respect to the above flexibility (Section~\ref{se:LES}) and we
explain how to trace a total basis of the total Witt group in such a
localization sequence, \emph{without} explicitly tracing the line
bundles on the nose but only their classes in $\Pic/2$. See
Theorem~\ref{thm:localization}.

We have already made use of this formalism in the revised version
of~\cite{Balmer07_pre} and we hope that other people computing
total Witt groups will enjoy the therapy.

\medskip
\section{The category of quadratic alignments}
\label{se:align}%
\medskip

%
\begin{defi}
\label{defi:align}%
Let $L_1$ and $L_2$ be line bundles on a scheme~$\CS$. We say that a
pair $A=(M,\phi)$ consisting of a line bundle $M$ on $\CS$ and an
isomorphism
$$
\phi:M\potimes{2}\otimes L_1\isotoo L_2
$$
is a \emph{(quadratic) alignment} from $L_1$ to $L_2$. Such an
alignment exists if and only if $[L_1]=[L_2]$ in $\Pic(\CS)/2$. We use
the following short notation for alignments\,:
$$
A:L_1\alto L_2\,.
$$
\end{defi}

\begin{defi}
\label{defi:al-cat}%
We say that two alignments $A=(M,\phi)$ and $A'=(M',\phi')$ from
$L_1$ to $L_2$ are \emph{isomorphic}, denoted $A\simeq A'$, if there
exists an isomorphism $\tau:M\isoto M'$ such that the following
diagram commutes\,:
$$
\xymatrix{
M\potimes{2}\otimes L_1 \ar[r]^-{\phi}
\ar[d]_-{{\tau}\potimes{2}\otimes 1}
& L_2 \ar@{=}[d]
\\
{M'}\potimes{2}\otimes L_1 \ar[r]^-{\phi'}
& L_2\,.\!\!
}
$$
There is only a \emph{set} of isomorphism classes of alignments from
$L_1$ to~$L_2$\,:
$$
\Al_{\CS}(L_1,L_2):=\{A:L_1\alto L_2\}/_{\displaystyle \simeq}\,.
$$
We denote by $[A]_\simeq$ in $\Al(L_1,L_2)$ the isomorphism class of
an alignment~$A:L_1\alto L_2$. We define the \emph{alignment
category}, denoted by
$$
\Alcat(\CS)\,,
$$
to be the category of line bundles over~$\CS$ with $\Al_{\CS}(L_1,L_2)$
as morphism sets from $L_1$ to~$L_2$. The composition of
$L_1\overset{A_1}\alto L_2\overset{A_2}\alto L_3$ is defined as
follows. If, say, $A_i=(M_i,\phi_i)$ with
$\phi_i:M_i\potimes{2}\otimes L_i\isoto L_{i+1}$ for $i=1,2$, then
$$
A_2\circ A_1:=\big(M_2\otimes M_1\,,\, \phi_3)\,:\ L_1\alto L_3
$$
where $\phi_3$ is the obvious isomorphism (essentially $\phi_2\circ
\phi_1$)
$$
(M_2\otimes M_1)\potimes{2}\otimes L_1\underset{(23)}\isotoo
{M_2}\potimes{2}\otimes M_1\potimes{2}\otimes
L_1\underset{1\otimes\phi_1}\isotoo {M_2}\potimes{2}\otimes
L_2\underset{\phi_2}\isotoo L_3\,.
$$
Composition is compatible with isomorphisms and is associative up to
isomorphism hence turns $\Alcat(\CS)$ into a category, in
which $\id_L$ is given by
$\big(\cO_\CS\,,\,\cO_\CS\potimes{2}\otimes L\cong L\big)$.
\end{defi}

\begin{lemm}
\label{lemm:groupoid}%
The category $\Alcat(\CS)$ is a groupoid, \ie for every alignment
$A:L_1\alto L_2$ there exists $A':L_2\alto L_1$ with $A\circ
A'\simeq\id_{L_2}$ and $A'\circ A\simeq \id_{L_1}$, that is,
$[A']_\simeq=[A]_\simeq\inv$. In particular, given $A_1$ and $A_2$
with same source (resp.\ same target) there always exists an
alignment $B$ such that $B\circ A_1\simeq A_2$ (resp.\ $A_1\circ
B\simeq A_2$).
\end{lemm}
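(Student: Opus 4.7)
The plan is to exhibit an explicit inverse: given $A=(M,\phi):L_1\alto L_2$, set $A':=(M^{\vee},\phi')$ where $\phi':(M^{\vee})^{\otimes 2}\otimes L_2\isoto L_1$ is the composition
$$
(M^{\vee})\potimes{2}\otimes L_2
\underset{1\otimes\phi\inv}\isotoo
(M^{\vee})\potimes{2}\otimes M\potimes{2}\otimes L_1
\underset{\ev\otimes\ev\otimes 1}\isotoo
\cO_\CS\otimes\cO_\CS\otimes L_1\isotoo L_1\,,
$$
where $\ev:M^{\vee}\otimes M\isoto\cO_\CS$ is the canonical evaluation pairing (after the appropriate permutation of tensor factors). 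This is manifestly an alignment $A':L_2\alto L_1$.

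Next I would verify $A\circ A'\simeq\id_{L_2}$. By definition $A\circ A'=(M\otimes M^{\vee},\phi_3)$ and $\id_{L_2}=(\cO_\CS,\psi)$ where $\psi:\cO_\CS\potimes{2}\otimes L_2\isoto L_2$ is the canonical identification. The natural candidate for the isomorphism of alignments is $\tau:=\ev:M\otimes M^{\vee}\isoto \cO_\CS$. The required commutativity is then a diagram chase purely in the symmetric monoidal category of line bundles on $\CS$, expressing the obvious fact that composing $\phi$ with $\phi\inv$ (conjugated by evaluations) yields the identity on $L_2$. The argument for $A'\circ A\simeq\id_{L_1}$ is symmetric, using $\ev:M^{\vee}\otimes M\isoto\cO_\CS$.

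For the "in particular" statement, suppose first that $A_1,A_2:L\alto L_i$ share the source $L$. Define $B:=A_2\circ A_1^{-1}:L_1\alto L_2$ using the inverse just constructed. Then
$$
B\circ A_1 \simeq A_2\circ (A_1^{-1}\circ A_1)\simeq A_2\circ \id_L\simeq A_2\,,
$$
using associativity and unitality of composition up to isomorphism (already established in Definition~\ref{defi:al-cat}). The case of common target is analogous with $B:=A_1^{-1}\circ A_2$.

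The main obstacle is purely bookkeeping: once the candidate inverse $(M^{\vee},\phi')$ is written down, everything reduces to verifying commutativity of a diagram built from associators, symmetries and evaluations, which holds by coherence of the symmetric monoidal structure on line bundles. No non-formal input is needed beyond the existence of the evaluation isomorphism $M^{\vee}\otimes M\isoto\cO_\CS$, which is what makes $\Pic(\CS)$ a group and thereby allows inversion at the level of alignments.
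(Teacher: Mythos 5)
Your proof is correct and takes essentially the same route as the paper: you construct the inverse alignment as $A'=(M^{-1},\phi')$ with $\phi'$ built from $\phi^{-1}$ followed by cancellation of $(M^{-1})^{\otimes 2}\otimes M^{\otimes 2}$, exactly as the paper does (the paper writes $M^{-1}$ where you write $M^{\vee}$, the same thing for line bundles). You merely spell out the verification that $A\circ A'\simeq\id$ and the ``in particular'' deduction, which the paper leaves implicit.
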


\begin{proof}
If $A=(M,\phi)$ take $A'=(M\inv,\phi'):L_2\alto L_1$ where $\phi'$
is $(M\inv)\potimes{2}\otimes\phi\inv$ followed by the canonical
isomorphism $(M\inv)\potimes{2}\otimes M\potimes{2}\otimes L_1\cong
L_1$.
\end{proof}

\begin{defi}
\label{defi:al-op}%
We can also tensor alignments\,: $(L_1\overset{A_1}\alto
L_1')\otimes(L_2\overset{A_2}\alto L_2')$. If, say,
$A_i=(M_i,\phi_i)$ with $\phi_i:M_i\potimes{2}\otimes L_i\isoto
L_i'$ for $i=1,2$, then their \emph{tensor product}
$$
A_1\otimes A_2:=\big(M_1\otimes M_2\,,\phi_4\big)\,:\ L_1\otimes
L_2\alto L_1'\otimes L_2'
$$
is given by the obvious isomorphism ($\phi_4$ is morally essentially
$\phi_1\otimes\phi_2$)
$$
(M_1\otimes M_2)\potimes{2}\otimes L_1\otimes
L_2\underset{(2453)}{\overset{\cong}\too} {M_1}\potimes{2}\otimes
L_1\otimes {M_2}\potimes{2}\otimes
L_2\underset{\phi_1\otimes\phi_2}\isotoo L_1'\otimes L_2'\,.
$$
The reader can verify that this tensor product preserves
isomorphisms of alignments and turns $\Alcat(\CS)$ into a symmetric
monoidal category.
\end{defi}

\begin{lemm}
\label{lemm:invert}%
Every line bundle $L$ over $\CS$ is invertible in~$\Alcat(\CS)$ for this
$\otimes$. In particular, the map $L\otimes-$ induces a bijection
$\Al_{\CS}(L_1,L_2)\isoto \Al_{\CS}(L\otimes L_1\,,\,L\otimes L_2)$ and
similarly for $-\otimes L$.
\end{lemm}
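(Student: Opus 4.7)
The plan is to exhibit an explicit tensor inverse of $L$ in $\Alcat(\CS)$, and then deduce the bijection statement by formal category theory. Since $[L \otimes L] = 2 [L] = 0$ in $\Pic(\CS)/2$, the natural candidate for an inverse of $L$ is $L$ itself.

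Concretely, I would produce the alignment
$$A_L := \bigl(L^{\vee},\, \phi\bigr)\,:\; L \otimes L \alto \cO_\CS\,,$$
where $\phi \colon (L^{\vee})\potimes{2} \otimes L \otimes L \isotoo \cO_\CS$ is obtained by applying the canonical evaluation $L^{\vee} \otimes L \isoto \cO_\CS$ twice, after the obvious permutation of the four tensor factors. By Lemma~\ref{lemm:groupoid} this $A_L$ admits an inverse $A_L^{-1} \colon \cO_\CS \alto L \otimes L$, so $L$ is $\otimes$-invertible in $\Alcat(\CS)$ with $L$ itself as a chosen inverse.

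For the bijection, the point is that $L \otimes -$ is a functor $\Alcat(\CS) \to \Alcat(\CS)$ by the bifunctoriality established in Definition~\ref{defi:al-op}, and tensoring $A_L$ with the identity provides a natural isomorphism between $(L \otimes -) \circ (L \otimes -) = (L \otimes L) \otimes -$ and $\cO_\CS \otimes - \cong \id$. Hence $L \otimes -$ is an auto-equivalence of $\Alcat(\CS)$, and in particular it induces a bijection on morphism sets $\Al_\CS(L_1, L_2) \isoto \Al_\CS(L \otimes L_1, L \otimes L_2)$; the case $- \otimes L$ is analogous, or follows from the symmetry of the monoidal structure. The only mildly delicate point is checking the naturality of $A_L \otimes \id$ with respect to composition and tensor product of alignments, but this amounts to a diagram chase that is essentially contained in the symmetric monoidal axioms verified in Definition~\ref{defi:al-op}.
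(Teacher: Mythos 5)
Your proof is correct and follows essentially the same strategy as the paper's (a one-liner exhibiting an explicit tensor inverse), with a cosmetic variation: the paper takes the inverse of $L$ in $\Alcat(\CS)$ to be $L^{-1}$, witnessed by the trivial alignment $(\cO_\CS,\text{can}):L\otimes L^{-1}\alto\cO_\CS$, whereas you take $L$ itself as the inverse, witnessed by $(L^\vee,\phi):L\otimes L\alto\cO_\CS$. Both are legitimate, since $L$ and $L^{-1}$ are isomorphic in $\Alcat(\CS)$; one small simplification you could make is to drop the appeal to Lemma~\ref{lemm:groupoid}, since the alignment $A_L$ is already an isomorphism $L\otimes L\cong\cO_\CS$ in $\Alcat(\CS)$, which is all that invertibility requires.
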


\begin{proof}
Indeed, $L\inv$ is also the inverse of $L$ in~$\Alcat(\CS)$.
\end{proof}

Finally, the following remark contains some functoriality of alignments with respect to the
scheme~$\CS$\,:
\begin{rema}
\label{rema:al-f*}%
Given a morphism $f:\bar \CS\to \CS$ and an alignment
$A=(M,\phi):L_1\alto L_2$ on~$\CS$, there is an obvious alignment
$f^*(A):=(f^*M,f^*\phi):f^*L_1\alto f^*L_2$. The reader will verify
functoriality of this construction\,: $A\simeq A'\implies f^*A\simeq
f^*A'$ and $f^*(A_2\circ A_1)\simeq f^*(A_2)\circ f^*(A_1)$ and
$f^*(A_1\otimes A_2) \simeq f^*A_1\otimes f^*A_2$ and
$(gf)^*(A)\simeq f^*(g^*(A))$. So we get a well-defined
$\otimes$-functor
$$
f^*:\Alcat(\CS)\too\Alcat(\bar \CS)
$$
such that $(gf)^*\simeq f^*g^*$.
\smallbreak

If $\CS$ and $\bar \CS$ are regular and $f:\bar \CS\to \CS$ is
proper, with relative canonical bundle~$\can_f$\,, and if
$A=(M,\phi):\,L_1\alto L_2$ is an alignment on~$\CS$, we define an
alignment $f^!(A):\,\can_f\otimes f^*L_1\alto \can_f\otimes f^*L_2$
on~$\bar \CS$ by
\begin{equation}
\label{eq:al-f!}%
f^!(A)=\big(f^*M\,,\,f^!\phi_2\big)
\end{equation}
where $f^!\phi_2$ is the canonical isomorphism
$$
f^*M\potimes{2}\otimes\can_{f}\otimes f^*L_1\underset{(123)}\isotoo
\can_{f}\otimes f^*M\potimes{2}\otimes f^*L_1\cong \can_{f}\otimes
f^*\big(M\potimes{2}\otimes L_1\big)\underset{1\otimes
f^*\phi}\isotoo \can_f\otimes f^*L_2\,.
$$
Using the monoidal structure, this can be stated as $f^!(A)=
\id_{\can_f}\otimes f^*(A)$. In particular, $f^!$ is as functorial
as $f^*$ was, except that $f^!$ is not monoidal.
\end{rema}

\medskip
\section{Alignment isomorphisms on Witt groups}
\label{se:al-W}%
\medskip

%
\begin{nota}
\label{nota:per}%
Let $\phi:L \isoto L'$ be an isomorphism of line bundles on a
scheme~$\CS$. The isomorphism induced on Witt groups is denoted by
$$\lbi{\phi}\,: \Wcoh^i_{\!Z}(\CS,L) \isoto \Wcoh^i_{\!Z}(\CS,L')\,.$$
The square periodicity isomorphism associated to a line bundle $M$
is denoted by
$$\per{M}\,: \Wcoh^i_{\!Z}(\CS,L) \isoto \Wcoh^i_{\!Z}(\CS,M^{\otimes 2}\otimes L).\,$$
\end{nota}

\begin{rema}
\label{rema:easy}%
Here are some easy compatibilities between those isomorphisms, that
we leave to the reader. We use the obvious notation.
\begin{enumerate}[\indent(i)]
\item
\label{easy:1}%
$\lbi{\phi'}\circ\lbi{\phi} = \lbi{\phi'\circ\phi}$.
\smallbreak
\item
\label{easy:2}%
$\per{M_2}\circ\per{M_1} =  \lbi{(23)}\,\circ\per{M_2\otimes M_1}$
for $(M_2\otimes M_1)\potimes{2}\otimes
L\overset{\cong}{\underset{(23)}\to} {M_2\!}\potimes{2}\otimes
M_1\potimes{2}\otimes L$.
\smallbreak
\item
\label{easy:3}%
$\per{M}\circ\lbi{\phi}=(\lbi{1\otimes\phi})\circ\per{M}$ where
$1\otimes\phi\,:\,M\potimes{2}\otimes L_1\isoto M\potimes{2}\otimes
L_2$.
\smallbreak
\item
\label{easy:4}%
$\per{M'}=(\lbi{\tau\potimes{2}\otimes1})\circ \per{M}$, for every
isomorphism $\tau:M\isoto M'$.
\end{enumerate}
\smallbreak
It follows that any composition of $\per{M_i}$ and $\lbi{\phi_j}$
(in any order) can always be reduced to one composition of the form
$\lbi{\phi}\circ\per{M}$. This is the true reason for the notion of
alignment introduced in Section~\ref{se:align} and yields
naturally\,:
\end{rema}

\begin{defi}
\label{defi:alis}%
For every alignment $A=(M,\phi):L_1\alto L_2$
(Definition~\ref{defi:align}), \ie $\phi:M\potimes{2}\otimes
L_1\isoto L_2$, we define an isomorphism
$\alis{A}:=\lbi{\phi}\circ\per{M}$
$$
\alis{A}\,:\ \Wcoh^*_{\!Z}(\CS,L_1)\isotoo\Wcoh^*_{\!Z}(\CS,L_2)
$$
that we call the \emph{alignment isomorphism (on Witt groups)}
associated to the alignment~$A:L_1\alto L_2$.
\end{defi}

\begin{exam}
\label{ex:u}%
To a unit $u\in \cO_\CS(\CS)^\times$ we can associate two things\,: An
isomorphism $u\cdot:L\isoto L$ for any line bundle~$L$ and a Witt
class $\langle u\rangle$ in $\Wper(\CS)=\Wcoh^0(\CS,\cO_\CS)$. It is an
easy exercise to verify that multiplication by $\langle u\rangle$ on
Witt groups is the same as the alignment isomorphism
$\lbi{(u\cdot)}=\alis{A_u}$, where $A_u=\big(\cO_\CS,(u\cdot)\big)$.
\end{exam}

With this example in mind, it seems reasonable to use the following
terminology\,:
\begin{defi}
\label{defi:weq}%
Let us say that two Witt classes $w_1\in\Wcoh^{j}_Z(\CS,L_1)$ and
$w_2\in\Wcoh^{j}_Z(\CS,L_2)$ are \emph{lax-similar} if there exists an
alignment $A:L_1\alto L_2$ such that $\alis{A}(w_1)=w_2$. This is an
equivalence relation, that we denote by
$$
w_1\weq w_2\,.
$$
\end{defi}

\begin{rema}
We have $w\weq 0$ if and only if $w=0$. However, Witt classes up to
lax-similitude do not form a group, as we already know from ordinary
similitude.
\end{rema}

\begin{prop}
\label{prop:al-comp}%
The assignment $A\mapsto \alis{A}$ respects the structures of
Section~\ref{se:align}\,:
\begin{enumerate}[\indent(a)]
\item
\label{it:al-iso}%
Isomorphic alignments $A\simeq A'$ induce the \emph{same} alignment
isomorphism $\alis{A}=\alis{A'}$ on Witt groups. Hence $\alis{(-)}$
is well-defined on~$\Alcat(\CS)$.
\item
\label{it:al-comp}%
Given two alignments $L_1\overset{A_1}\alto L_2\overset{A_2}\alto
L_3$, we have
$$
\alis{(A_2\circ A_1)}=\alis{A_2}\circ \alis{A_1}
$$
on Witt groups. Hence $\alis{(-)}$ is functorial.
\item
\label{it:al-tens}%
Given two alignments $A_i\,:\ L_i\alto L_i'$ and two Witt classes
$w_i\in \Wper^{j_i}_{\!Z_i}(\CS,L_i)$, for $i=1,2$, we have
$$
\alis{(A_1\otimes A_2)}\,(w_1\cdot
w_2)=\big(\alis{A_1}(w_1)\big)\cdot \big(\alis{A_2}(w_2)\big)\,.
$$
in $\Wper^{j_1+j_2}_{\!Z_1\cap Z_2}(\CS,L_1'\otimes L_2')$. Hence
$\alis{(-)}$ is (somewhat) monoidal.
\end{enumerate}
\end{prop}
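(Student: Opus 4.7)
The plan is to treat all three parts as essentially formal computations that reduce to combining the identities of Remark~\ref{rema:easy}(i)--(iv) with the definitions of $\alis{A}$, composition and tensor of alignments given in Section~\ref{se:align}. I would handle them in the order (a), (b), (c), since (b) follows cleanly from the reduction principle already isolated in Remark~\ref{rema:easy} and (c) is the most involved; (a) on the other hand is used nowhere else in the proof but confirms that $\alis{(-)}$ descends to~$\Alcat(\CS)$.

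For~(a), suppose $A=(M,\phi)$ and $A'=(M',\phi')$ are related by $\tau:M\isoto M'$ with $\phi'\circ(\tau\potimes{2}\otimes 1)=\phi$, as required by Definition~\ref{defi:al-cat}. I would apply Remark~\ref{rema:easy}(iv) to rewrite $\per{M'}=\lbi{\tau\potimes{2}\otimes1}\circ\per{M}$ and then Remark~\ref{rema:easy}(i) to conclude
$$
\alis{A'}=\lbi{\phi'}\circ\per{M'}=\lbi{\phi'\circ(\tau\potimes{2}\otimes1)}\circ\per{M}=\lbi{\phi}\circ\per{M}=\alis{A}.
$$
For~(b), expand $\alis{A_2}\circ\alis{A_1}=\lbi{\phi_2}\circ\per{M_2}\circ\lbi{\phi_1}\circ\per{M_1}$. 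I would commute the middle pair by Remark~\ref{rema:easy}(iii), collapse the two periodicities via Remark~\ref{rema:easy}(ii), and finally fuse the three $\lbi{-}$'s via Remark~\ref{rema:easy}(i). The resulting composite isomorphism of line bundles is, on the nose, the isomorphism $\phi_3$ built in Definition~\ref{defi:al-cat}, so what I get is $\lbi{\phi_3}\circ\per{M_2\otimes M_1}=\alis{A_2\circ A_1}$.

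Part~(c) uses, in addition, two structural properties of the product recalled in the introduction: first, bilinearity and naturality in the twist, so that $\lbi{\phi_1}(u_1)\cdot\lbi{\phi_2}(u_2)=\lbi{\phi_1\otimes\phi_2}(u_1\cdot u_2)$; and second, the description of $\per{M}$ as multiplication by the Witt class $[M]\in\Wcoh^0(\CS,M\potimes{2})$ given right before equation~\eqref{Per_eq}. Using these, together with associativity and graded-commutativity of the Witt product, I would transform the right-hand side of~(c) step by step: pull the $\lbi{\phi_i}$ out using naturality, rewrite $\per{M_1}(w_1)\cdot\per{M_2}(w_2)$ as $[M_1]\cdot w_1\cdot [M_2]\cdot w_2$, permute the inner factors to reach $[M_1]\cdot[M_2]\cdot w_1\cdot w_2=\per{M_1\otimes M_2}(w_1\cdot w_2)$ modulo an explicit rearrangement of tensor factors, and then identify that rearrangement as exactly the permutation $(2453)$ baked into the definition of~$\phi_4$ in Definition~\ref{defi:al-op}.

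The main obstacle is the bookkeeping in~(c): one must check that the natural symmetry used to move $[M_2]$ past $w_1$ in the product, together with the swap $(23)$ used in the two separate applications of $\per{-}$, reassemble into precisely the permutation $(2453)$ chosen to define $\phi_4$ and not some cousin differing by a transposition. Once this combinatorial identification of symmetry isomorphisms of line bundles is made, the coincidence $\alis{A_1\otimes A_2}(w_1\cdot w_2)=\alis{A_1}(w_1)\cdot\alis{A_2}(w_2)$ falls out by a single application of Remark~\ref{rema:easy}(i) collapsing the remaining $\lbi{-}$'s.
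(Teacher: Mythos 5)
Your proposal is correct and matches the paper's own argument: parts (a) and (b) are exactly the reduction via Remark~\ref{rema:easy}\,(i)--(iv) that the paper uses (your algebraic rewriting of (b) is the equational form of the paper's commutative diagram), and part (c) is the computation the paper dismisses as ``straightforward from the definition of the Witt group product.'' Your extra care in (c) about matching the symmetry isomorphism to the permutation $(2453)$ of Definition~\ref{defi:al-op} is precisely the bookkeeping the paper leaves implicit, so the approach is the same in substance.
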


\begin{proof}
Part~(a) follows from Remark~\ref{rema:easy}~\eqref{easy:1}
and~\eqref{easy:4}. For~(b), contemplate
$$
\xymatrix@C=4ex{
\Wcoh^*_{\!Z}(\CS,L_1) \ar[r]^-{\per{M_1}} \ar[d]^-{\per{M_2\otimes
M_1}}
& \Wcoh^*_{\!Z}(\CS,M_1\potimes{2}\otimes L_1) \ar[r]^-{\lbi{\phi_1}}
 \ar[d]^-{\per{M_2}}
& \Wcoh^*_{\!Z}(\CS,L_2) \ar[d]^-{\per{M_2}}
\\
\Wcoh^*_{\!Z}(\CS,{(M_2\otimes M_1)}\potimes{2}\otimes L_1)
\ar[r]_-{\lbi{(23)}}
& \Wcoh^*_{\!Z}(\CS,M_2\potimes{2}\otimes M_1\potimes{2}\otimes L_1) \ar[r]_-{\lbi{1\otimes\phi_1}}
& \Wcoh^*_{\!Z}(\CS,M_2\potimes{2}\otimes L_2)
}
$$
This diagram commutes by Remark~\ref{rema:easy}~\eqref{easy:2}
and~\eqref{easy:3}. Post-composing the two outside compositions with
$\lbi{\phi_2}:\,\Wcoh^*_{\!Z}(\CS,{M_2}\potimes{2}\otimes L_2)\isoto
\Wcoh^*_{\!Z}(\CS,L_3)$ yields the result. Part~(c) is straightforward
from the definition of the Witt group product~\cite{Gille03}.
\end{proof}

\begin{rema}
Since $\Alcat(\CS)$ is a groupoid (Lemma~\ref{lemm:groupoid}),
Proposition~\ref{prop:al-comp}\,\eqref{it:al-comp} gives us that any
zig-zag of alignment isomorphisms (on a given scheme) can be
realized by one single alignment isomorphism. Note however that
there might be non-isomorphic alignments from $L_1$ to $L_2$, hence
possibly non-equal alignment isomorphisms on Witt groups.
\end{rema}

Compatibility of $\alis{A}$ with morphisms $f:\bar \CS\to \CS$ is the
subject of Section~\ref{se:gen-pp}.

\begin{rema}
\label{rema:al-per}%
It is easy to verify that for every alignment $A:L_1\alto L_2$ the
alignment isomorphism
$\alis{A}:\Wcoh^*_{\!Z}(\CS,L_1)\isoto\Wcoh^*_{\!Z}(\CS,L_2)$
commutes with the periodicity isomorphism $\Wcoh^*\isoto
\Wcoh^{*+4}$, the latter being simply induced on the underlying
triangulated category by the square of the shift~$\Sigma^2$. This
automorphism $\Sigma^2$ commutes with all operations involved
in~$\alis{A}$. Note that $\Sigma^2$ has no hidden sign either.
\end{rema}

\begin{rema}
\label{rema:al-les}%
Alignment isomorphisms are compatible with localization long exact
sequences. Compatibility of $\alis{A}$ with restriction to an open
subscheme is a special case of Corollary~\ref{coro:reasy} below. Let
us discuss the other two types of homomorphisms.
\begin{enumerate}[(1)]
\item
Let $Z\subset Z'$ be closed subsets of~$\CS$. Then
extension-of-support $\ext:\Wcoh^*_{\!Z}(\CS,L)\to\Wcoh^*_{\!Z'}(\CS,L)$
obviously commutes with alignment isomorphism~$\alis{A}$.
\item
For the connecting homomorphism, let $U=\CS\smallsetminus Z$ and
consider $A:L\alto L'$ over~$\CS$ and its obvious restriction
$A\restr{U}:L\restr{U}\alto L'\restr{U}$. Then the following diagram
$$
\xymatrix{\Wcoh^{j}(U,L\restr{U}) \ar[r]^-{\bord} \ar[d]_-{\alis{A\restr{U} }}
& \Wcoh^{j+1}_{\!Z}(\CS,L) \ar[d]^-{\alis{A}}
\\
\Wcoh^{j}(U,L'\restr{U}) \ar[r]^-{\bord}
& \Wcoh^{j+1}_{\!Z}(\CS,L')
}
$$
commutes. Indeed, if $A=(M,\phi)$, then
$\alis{A}=\lbi{\phi}\circ\per{M}$ and $\bord$ clearly commutes with
$\lbi{\phi}$. It is proved in~\cite{Gille03} that $\bord$ is
$\Wper^*(\CS)$-linear, which explains why it commutes with $\per{M}$.
\end{enumerate}
\end{rema}

\medskip
\section{Lax pull-back and lax push-forward}
\label{se:gen-pp}%
\medskip

We assume for simplicity (in the treatment of the push-forward) that
all schemes $\CS$, $\bar \CS$, etc., are smooth over our regular
base~$\BS$.

Given a morphism $f:\bar \CS\to \CS$ and a closed subset $Z$ of $\CS$, we
have a pull-back
$$f^*:\Wcoh^*_{\!Z}(\CS,L)\to\Wcoh^*_{f\inv(Z)}(\bar \CS,f^*L)\,.$$
On the other hand, when $f$ is proper and has constant relative
dimension~$\dim(f)$ (the latter is automatic if $\bar \CS$ is connected)
and has relative canonical bundle~$\can_f$, we have a push-forward
homomorphism for every closed subset~$\bar Z\subset \bar \CS$
$$f_*:\,\Wcoh^{*+\dim(f)}_{\bar Z}\,\big(\bar \CS,\can_f\otimes f^*L\big)\,\too\,\Wcoh^{*}_{f(\bar Z)}(\CS,L)\,.$$
\begin{rema}
\label{rema:reasy}
\noindent Continuing Remark~\ref{rema:easy}, for every morphism of
schemes $f:\bar{\CS}\to \CS$, we have naturality\,:
\begin{enumerate}[\indent(i)]
\setcounter{enumi}{4}
\item
\label{easy:5}%
 $f^* \circ \lbi{\phi} = \lbi{f^*\phi}\,\circ f^*$.
\smallbreak
\item
\label{easy:6}%
$f^* \circ \per{M} = \per{f^*M}\circ f^*$.
\end{enumerate}
\smallbreak
\noindent Finally, if $f:\bar{\CS}\to \CS$ is proper with relative
canonical bundle~$\can_f$, we have\,:
\begin{enumerate}[\indent(i)]
\setcounter{enumi}{6}
\item
\label{easy:7}%
$\lbi{\phi}\circ f_*=f_* \circ (\lbi{1 \otimes f^*\phi})$ for the
canonical $1 \otimes f^*\phi:\, \can_f \otimes f^*L_1 \isoto \can_f
\otimes f^*L_2$.
\smallbreak
\item
\label{easy:8}%
$\per{M} \circ f_* = f_* \circ \lbi{(123)}\circ\per{f^*M}$ by the
projection formula, where
$(123):(f^*M)\potimes{2}\otimes\can_f\otimes
f^*L_1\overset{\cong}\too \can_f\otimes f^*\big(M\potimes{2}\otimes
L_1\big)$ is the canonical isomorphism.
\end{enumerate}
\end{rema}

\begin{coro}
\label{coro:reasy}%
Recall Remark~\ref{rema:al-f*} for $f^*$ and $f^!$ on alignments.
For $f:\bar \CS\to \CS$ and for an alignment $A=(M,\phi):\,L_1\alto L_2$
on~$\CS$, we have on Witt groups
$$
f^*\circ \alis{A}=\alis{(f^*A)}\circ f^*\,.
$$
When $f$ is proper, we have on Witt groups
$$
\alis{A}\circ f_*=f_*\circ\alis{(f^!A)}\,.
$$
\end{coro}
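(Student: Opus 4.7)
The plan is to reduce both statements to the elementary naturality relations already recorded in Remark~\ref{rema:easy} and Remark~\ref{rema:reasy}, using the factorization $\alis{A}=\lbi{\phi}\circ\per{M}$ from Definition~\ref{defi:alis}. Since the alignment isomorphism has this canonical form, and since each of its two factors satisfies a separate compatibility with $f^*$ (resp.\ $f_*$), the corollary should fall out by stringing those compatibilities together.

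For the pull-back statement, I would expand $\alis{A}=\lbi{\phi}\circ\per{M}$ and note that, by the very definition of $f^*A=(f^*M,f^*\phi)$ in Remark~\ref{rema:al-f*}, we have $\alis{(f^*A)}=\lbi{f^*\phi}\circ\per{f^*M}$. Applying Remark~\ref{rema:reasy}\,\eqref{easy:5} and \eqref{easy:6} then gives
$$
f^*\circ\alis{A}=f^*\circ\lbi{\phi}\circ\per{M}=\lbi{f^*\phi}\circ f^*\circ\per{M}=\lbi{f^*\phi}\circ\per{f^*M}\circ f^*=\alis{(f^*A)}\circ f^*.
$$
This is completely formal and there is nothing subtle here.

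The push-forward statement is the real point. Write $f^!A=(f^*M,f^!\phi_2)$ with $f^!\phi_2$ defined by~\eqref{eq:al-f!}, so that $\alis{(f^!A)}=\lbi{f^!\phi_2}\circ\per{f^*M}$. Starting from $\alis{A}\circ f_* = \lbi{\phi}\circ\per{M}\circ f_*$, I would use Remark~\ref{rema:reasy}\,\eqref{easy:8} to move $\per{M}$ past $f_*$, picking up a $\per{f^*M}$ together with a $\lbi{(123)}$ twist, and then use \eqref{easy:7} to move $\lbi{\phi}$ past $f_*$, producing $\lbi{1\otimes f^*\phi}$. This gives
$$
\alis{A}\circ f_* = f_*\circ\lbi{1\otimes f^*\phi}\circ\lbi{(123)}\circ\per{f^*M}.
$$
By Remark~\ref{rema:easy}\,\eqref{easy:1}, the two $\lbi{(-)}$ terms compose to $\lbi{(1\otimes f^*\phi)\circ(123)}$, and the definition~\eqref{eq:al-f!} of $f^!\phi_2$ is exactly this composite. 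Hence the right-hand side equals $f_*\circ\lbi{f^!\phi_2}\circ\per{f^*M}=f_*\circ\alis{(f^!A)}$, as desired.

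The main obstacle I anticipate is bookkeeping rather than conceptual: one must make sure that the canonical isomorphism denoted $(123)$ in Remark~\ref{rema:reasy}\,\eqref{easy:8}, applied to tensor factors $(f^*M)^{\otimes 2}\otimes\can_f\otimes f^*L_1$, truly matches the one used to build $f^!\phi_2$ in~\eqref{eq:al-f!}. This is purely a question of identifying the same permutation of tensor factors in two notations, so it is routine; once this identification is accepted, the whole corollary is a one-line chain of substitutions.
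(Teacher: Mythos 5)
Your proposal is correct and takes precisely the paper's approach: the paper's own proof is the one-liner ``This is a compact form of Remark~\ref{rema:reasy},'' and your argument is exactly the chain of substitutions (expanding $\alis{A}=\lbi{\phi}\circ\per{M}$ and pushing through \eqref{easy:5}--\eqref{easy:8}, then recombining via \eqref{easy:1}) that this compactness hides. Your flagged bookkeeping point about the label $(123)$ matching in \eqref{easy:8} and \eqref{eq:al-f!} is the only genuine thing to check, and you identify it correctly as the same composite permutation-plus-$f^*$-monoidal isomorphism written once as a single arrow and once as two.
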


\begin{proof}
This is a compact form of Remark~\ref{rema:reasy}.
\end{proof}

\begin{rema}
Given a morphism $f:\bar \CS\to \CS$, we can compose the pull-back $f^*$
with alignment isomorphisms on $\CS$ and on $\bar \CS$. By
Corollary~\ref{coro:reasy}, any such composition can be brought down
to one of the form $\alis{\bar A}\circ f^*$ for some alignment $\bar
A$ on~$\bar \CS$. Similarly, one can combine the push-forward $f_*$
with alignment isomorphisms on~$\CS$ and $\bar \CS$ and again the
alignment isomorphisms on~$\CS$ are redundant, \ie such a composition
always boils down to one of the form $f_*\circ \alis{\bar A}$ for
some alignment $\bar A$ on~$\bar \CS$. Let us give names to those
generalized pull-back and push-forward.
\end{rema}

\begin{defi}
\label{defi:gen_pull}%
Let $f:\bar{\CS}\to \CS$ be a morphism. Let $L$ and $\bar L$ be line
bundles on $\CS$ and $\bar{\CS}$ respectively and let $\bar
A:f^*L\leadsto \bar L$ be an alignment on~$\bar \CS$
(Definition~\ref{defi:align}). We define the \emph{lax pull-back
homomorphism} from $\Wcoh^*_{\!Z}(\CS,L)$ to $\Wcoh^*_{\!f\inv
(Z)}(\bar{\CS},\bar{L})$ by
$$
\pull{f}{\bar A}\,:=\alis{\bar A}\circ
f^*\,:\quad\Wcoh^*_{\!Z}(\CS,L)\too\Wcoh^*_{f\inv (Z)}(\bar \CS,f^* L)
\isotoo\Wcoh^*_{f\inv (Z)}(\bar \CS,\bar L)\,.
$$
\end{defi}

\begin{defi}
\label{defi:gen_push}%
Let $f:\bar{\CS}\to \CS$ be a proper morphism with relative canonical
bundle~$\can_f$ and relative dimension~$d=\dim(f)$. Let $L$ and
$\bar L$ be line bundles on $\CS$ and $\bar{\CS}$ respectively, and let
$\bar A:\bar L\alto\can_f\otimes f^*L$ be an alignment on~$\bar \CS$
(note $\can_f$ here). We define the \emph{lax push-forward
homomorphism} from $\Wcoh^{*+d}_{\!\bar Z}(\bar \CS,\bar{L})$ to
$\Wcoh^*_{\!f(\bar Z)}(\CS,L)$ by
$$
\push{f}{\bar A}\,:=f_*\circ\alis{\bar A}\,:\quad
\Wcoh^{*+d}_{\!\bar Z}(\bar \CS,\bar{L})\isotoo
\Wcoh^{*+d}_{\!\bar Z}(\bar \CS,\can_f\otimes f^*
L)\too\Wcoh^*_{\!f(\bar Z)}(\CS,L)\,.
$$
\end{defi}

By Proposition~\ref{prop:al-comp}~\eqref{it:al-iso}, both
$\pull{f}{\bar A}$ and $\push{f}{\bar A}$ only depend on the
isomorphism class of~$\bar A$.
\begin{prop}
\label{prop:gen-func}%
For composable $\tilde \CS\overset{g}\to \bar \CS\overset{f}\to \CS$
and for alignments $\bar A$ on $\bar \CS$ and $\tilde A$ on $\tilde
\CS$ such that $\tilde A$ and $g^*\bar A$ are composable, we have
$$\Pull{g}{\tilde A}\,\circ \,\Pull{f}{\bar
A}=\Pull{fg\,}{\tilde A\circ(g^*\!\bar A)\,}.
$$
If instead $g^!\bar A$ and $\tilde A$ are composable and if $f$ and $g$ are moreover proper, then
$$
\Push{f}{\bar A}\,\circ \,\Push{g}{\tilde A}=\Push{fg\,}{\,(g^!\bar
A)\circ\tilde A}
$$
\end{prop}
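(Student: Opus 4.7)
The plan is to unfold the definitions of the lax pull-back and lax push-forward and then juggle the alignment isomorphisms past $f^*$ (resp.\ $g_*$) using the compatibilities already established in Corollary~\ref{coro:reasy}, followed by an application of Proposition~\ref{prop:al-comp}\,\eqref{it:al-comp} to fuse two consecutive alignment isomorphisms into one, and finally by the standard functorialities $g^*\circ f^*=(fg)^*$ and $f_*\circ g_*=(fg)_*$.

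\textbf{Pull-back case.} First I would write
$$
\Pull{g}{\tilde A}\circ\Pull{f}{\bar A}
= \alis{\tilde A}\circ g^*\circ \alis{\bar A}\circ f^*.
$$
Then the first half of Corollary~\ref{coro:reasy} applied to $g$ and the alignment $\bar A$ on $\bar{\CS}$ rewrites $g^*\circ\alis{\bar A}$ as $\alis{g^*\bar A}\circ g^*$, giving
$$
\alis{\tilde A}\circ\alis{g^*\bar A}\circ g^*\circ f^*.
$$
Proposition~\ref{prop:al-comp}\,\eqref{it:al-comp} then collapses the two alignments into $\alis{\tilde A\circ(g^*\bar A)}$, and the usual functoriality $g^*\circ f^*=(fg)^*$ delivers $\Pull{fg\,}{\,\tilde A\circ(g^*\bar A)}$. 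Before doing this chain I would quickly check the composability: $\bar A$ has target $\bar L$, so $g^*\bar A$ has target $g^*\bar L$, which is precisely the source of $\tilde A$, so the composite alignment in the statement makes sense.

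\textbf{Push-forward case.} The argument is entirely parallel, just on the other side. I would expand
$$
\Push{f}{\bar A}\circ\Push{g}{\tilde A}
= f_*\circ\alis{\bar A}\circ g_*\circ\alis{\tilde A}.
$$
The second half of Corollary~\ref{coro:reasy} applied to the proper map $g$ and the alignment $\bar A$ on $\bar{\CS}$ replaces $\alis{\bar A}\circ g_*$ by $g_*\circ\alis{g^!\bar A}$, yielding
$$
f_*\circ g_*\circ\alis{g^!\bar A}\circ\alis{\tilde A}.
$$
Functoriality of push-forward, $f_*\circ g_*=(fg)_*$, combined with Proposition~\ref{prop:al-comp}\,\eqref{it:al-comp} fusing the two alignment isomorphisms into $\alis{(g^!\bar A)\circ\tilde A}$, gives exactly $\Push{fg\,}{\,(g^!\bar A)\circ\tilde A}$. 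Here I would note that the relative canonical bundle formula $\can_{fg}\cong\can_g\otimes g^*\can_f$ makes the target of $g^!\bar A$ match the target $\can_{fg}\otimes(fg)^*L$ required by the formula, so $(g^!\bar A)\circ\tilde A$ is a genuine alignment.

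\textbf{Main obstacle.} There is no substantive mathematical obstacle; everything reduces to the compatibilities of the previous two sections. The only care needed is bookkeeping: verifying that the sources and targets of the various alignments line up under the canonical identifications (in particular the canonical bundle factorization for $fg$ in the push-forward case), and that the rearrangements used from Remark~\ref{rema:easy} and Corollary~\ref{coro:reasy} happen at precisely the intermediate Witt groups one expects. I would not write out these identifications beyond pointing to the relevant canonical isomorphisms, since Proposition~\ref{prop:al-comp}\,\eqref{it:al-iso} guarantees that $\pull{}{}$ and $\push{}{}$ depend only on isomorphism classes of alignments.
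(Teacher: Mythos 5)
Your proof is correct and is essentially what the paper intends: the paper's own argument is the one-line "Direct by Corollary~\ref{coro:reasy}, Definitions~\ref{defi:gen_pull} and~\ref{defi:gen_push} and Proposition~\ref{prop:al-comp}\,\eqref{it:al-comp}," which is exactly the chain of rewrites you carry out explicitly. Your remarks on the source/target bookkeeping and the factorization $\can_{fg}\cong\can_g\otimes g^*\can_f$ in the push-forward case are the right checks to make; the paper leaves them implicit.
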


\begin{proof}
Direct by Corollary~\ref{coro:reasy},
Definitions~\ref{defi:gen_pull} and~\ref{defi:gen_push} and
Proposition~\ref{prop:al-comp}\,\eqref{it:al-comp}.
\end{proof}

\medskip
\section{Descending alignments}
\label{se:desc}%
\medskip

Given a morphism $f:\bar \CS\to \CS$, it might happen that two line
bundles $L_1$ and $L_2$ on~$\CS$ have $f^*L_1$ and $f^*L_2$ aligned
over~$\bar \CS$, in the sense of Definition~\ref{defi:align}, without
necessarily being aligned over~$\CS$. This can cause sorrow in the
taverns. We propose a simple solution, which will be convenient in
applications.

\begin{defi}
\label{defi:SmPic}%
Recall our separated, noetherian regular base scheme~$\BS$ over
$\bbZ[\half]$. Let $\CS$ be a scheme over~$\BS$. We denote by
$\pi_\CS:\CS\to \BS$ the structure morphism and by $\Pic_\BS(\CS)$ the
cokernel of $\pi_\CS^*:\Pic(\BS)\to \Pic(\CS)$. Consider the full
subcategory
$$\SmPic_\BS$$
of smooth $\BS$-schemes $\pi_\CS:\CS\to \BS$ (with morphisms
over~$\BS$ of course) such that\,:
\begin{enumerate}[\indent(I)]
\item \label{picinj_item} The map $\pi_\CS^*:\Pic(\BS) \to \Pic(\CS)$ is injective.
\smallbreak
\item \label{2torzero_item} The abelian group
$\Pic_\BS(\CS)=\Pic(\CS)/\pi_\CS^*\big(\Pic(\BS)\big)$ has no $2$-torsion.
\smallbreak
\item \label{Gmhyp_item} The map $\pi_\CS^*:\Gm(\BS) \to \Gm(\CS)/\Gm(\CS)^2$ is surjective.
\end{enumerate}
\end{defi}

\begin{rema}
Note that $\BS$ itself is in~$\SmPic_\BS$. Projective spaces over $X$, Grassmannians and various flag varieties over $X$ are in $\SmPic_X$, as explained in \cite{Balmer07_pre}. If $X$ is local (\eg\ the spectrum of a field) and thus has trivial Picard group, then projective varieties over $X$ with no $2$-torsion in their Picard group are in $\SmPic_X$. 
\end{rema}

Assumptions~\eqref{picinj_item} and~\eqref{2torzero_item}
allow an easy chase\,:
\begin{lemm}
\label{lemm:chase}%
Let $f:\bar \CS\to \CS$ be a morphism in $\SmPic_\BS$. Then\,:
\begin{enumerate}[\indent(a)]
\item\label{it:2-tor}%
The homomorphism $f^*:\Pic(\CS)\to\Pic(\bar \CS)$ induces an
isomorphism on $2$-torsion subgroups ${}_2\!\Pic(\CS)\isoto
{}_2\!\Pic(\bar \CS)$.
\smallbreak
\item\label{it:seq}%
The sequence $0\to \Pic(\BS)/2 \to \Pic(\CS)/2 \to \Pic_\BS(\CS)/2 \to 0$ is
exact.
\smallbreak
\item\label{it:desc}%
The homomorphism $\Pic(\CS)/2\too\Pic_\BS(\CS)/2\,\oplus\,\Pic(\bar \CS)/2$
is injective.
\smallbreak
\item\label{it:trivial}%
If a line bundle $\bar L$ on $\bar \CS$ is such that $[\bar L]=[f^*L]$
in $\Pic_\BS(\bar \CS)/2$ for some $L$ over~$\CS$, then there exists $L'$
over $\CS$ with $[L']=[L]$ in $\Pic_\BS(\CS)/2$ and $[\bar L]=[f^*L']$ in
$\Pic(\bar \CS)/2$ already.
\end{enumerate}\end{lemm}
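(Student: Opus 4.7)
The plan is a diagram chase organised around the defining short exact sequence
$$0\to \Pic(\BS)\otoo{\pi_\CS^*}\Pic(\CS)\too\Pic_\BS(\CS)\to 0,$$
which is left-exact by axiom~\eqref{picinj_item}, and its analogue for $\bar\CS$; the two are tied together by the commutative triangle $\pi_{\bar\CS}^*=f^*\circ \pi_\CS^*$. The key input for killing nuisance terms at each step will be axiom~\eqref{2torzero_item}, saying that $\Pic_\BS(-)$ has no $2$-torsion, applied to both $\CS$ and $\bar\CS$.

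For~\eqref{it:2-tor}, any $L\in {}_2\!\Pic(\CS)$ has $2$-torsion image in $\Pic_\BS(\CS)$, hence zero image by~\eqref{2torzero_item}, so $L\cong\pi_\CS^*N$ for some $N\in\Pic(\BS)$; injectivity~\eqref{picinj_item} then forces $N$ to be $2$-torsion. Thus ${}_2\!\Pic(\BS)\isoto{}_2\!\Pic(\CS)$, and symmetrically for $\bar\CS$; the triangle $\pi_{\bar\CS}^*=f^*\pi_\CS^*$ makes $f^*$ on $2$-torsion compatible with the two identifications, giving the desired isomorphism. For~\eqref{it:seq}, I would apply $-\otimes_\bbZ\bbZ/2$ to the sequence above: the tail of the $\mathrm{Tor}$ long exact sequence reads $\mathrm{Tor}_1^\bbZ(\Pic_\BS(\CS),\bbZ/2)\to \Pic(\BS)/2\to\Pic(\CS)/2\to\Pic_\BS(\CS)/2\to 0$, and $\mathrm{Tor}_1^\bbZ(\Pic_\BS(\CS),\bbZ/2)={}_2\!\Pic_\BS(\CS)=0$ by~\eqref{2torzero_item}.

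For~\eqref{it:desc}, part~\eqref{it:seq} identifies the kernel of $\Pic(\CS)/2\to\Pic_\BS(\CS)/2$ with the image of $\Pic(\BS)/2$, so it suffices to show $\Pic(\BS)/2\to\Pic(\bar\CS)/2$ is injective. This repeats the pattern of~\eqref{it:2-tor} on $\bar\CS$: if $\pi_{\bar\CS}^*N\cong \bar M^{\otimes 2}$, then $[\bar M]\in\Pic_\BS(\bar\CS)$ is $2$-torsion hence zero by~\eqref{2torzero_item}, so $\bar M\cong\pi_{\bar\CS}^*M$, and injectivity~\eqref{picinj_item} for $\bar\CS$ yields $N\cong M^{\otimes 2}$. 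For~\eqref{it:trivial}, I unpack the hypothesis as $\bar L\cong f^*L\otimes\pi_{\bar\CS}^*K\otimes \bar N^{\otimes 2}$ for some $K\in\Pic(\BS)$ and $\bar N\in\Pic(\bar\CS)$; then the line bundle $L':=L\otimes\pi_\CS^*K$ on $\CS$ satisfies $[L']=[L]$ in $\Pic_\BS(\CS)/2$ by construction, and $f^*L'\cong f^*L\otimes\pi_{\bar\CS}^*K$, whence $\bar L\otimes (f^*L')^{-1}\cong \bar N^{\otimes 2}$, so $[f^*L']=[\bar L]$ already in $\Pic(\bar\CS)/2$.

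There is no substantive obstacle here: once the short exact sequence defining $\Pic_\BS(\CS)$ is in place, the four items reduce to clean bookkeeping with $\pi_\CS^*$, $\pi_{\bar\CS}^*$ and $f^*$. Note that axiom~\eqref{Gmhyp_item} about units plays no role in this lemma, which makes sense since everything happens in the Picard group; this axiom will presumably enter only later, when one actually descends alignments (or their induced isomorphisms on Witt groups) rather than mere classes in $\Pic/2$.
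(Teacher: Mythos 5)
Your proof is correct and follows essentially the same route as the paper's: the paper bundles (a) and (b) into one application of the snake lemma to multiplication by~$2$ on the short exact sequence $0\to\Pic(\BS)\to\Pic(\CS)\to\Pic_\BS(\CS)\to0$, and then proves (c) by chasing the comparison of the two resulting sequences for $\CS$ and $\bar\CS$ via $f^*$; your direct torsion chase for (a), the $\mathrm{Tor}$-sequence for (b), and the reduction of (c) to injectivity of $\Pic(\BS)/2\to\Pic(\bar\CS)/2$ (itself just (b) for $\bar\CS$) are only cosmetic repackagings of that argument, and (d) is verbatim the same. Your closing observation that axiom~\eqref{Gmhyp_item} is unused in this lemma and only enters later (Theorem~\ref{thm:desc}) is also accurate.
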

\begin{proof}
Since $\pi_{\CS}\,f=\pi_{\bar \CS}$ it suffices to
prove~\eqref{it:2-tor} for $f=\pi_{\CS}:\CS\to \BS$. Multiplication
by $2$ yields an endomorphism of the following short sequence of
abelian groups
$$0 \to \Pic(\BS) \otoo{\pi_{\CS}^*} \Pic(\CS) \too \Pic_\BS(\CS) \to 0,$$
which is exact by~\eqref{picinj_item} above. The Snake Lemma and
Assumption~\eqref{2torzero_item} give~\eqref{it:2-tor}
and~\eqref{it:seq}. For~\eqref{it:desc}, it suffices to compare the
exact sequences~\eqref{it:seq} for $\CS$ and~$\bar \CS$ via~$f^*$\,:
$$
\xymatrix@R=1.5em{0 \ar[r]
& \Pic(\BS)/2 \ar[r] \ar@{=}[d]
& \Pic(\CS)/2 \ar[r] \ar[d]^-{f^*}
& \Pic_\BS(\CS)/2 \ar[r] \ar[d]^-{f^*}
& 0
\\0 \ar[r]
& \Pic(\BS)/2 \ar[r]
& \Pic(\bar \CS)/2 \ar[r]
& \Pic_\BS(\bar \CS)/2 \ar[r]
& 0
}
$$
to chase the wanted injectivity (in fact, the right-hand square is
cartesian). For~\eqref{it:trivial}, if there exists a line bundle
$K$ on~$\BS$ with $[\bar L]=[f^*L\otimes \pi_{\bar
\CS}^*K]=[f^*(L\otimes \pi_{\CS}^*K)]$ in~$\Pic(\bar \CS)/2$, take
$L':=L\otimes \pi_\CS^*K$, which is in the same class as $L$
in~$\Pic_\BS(\CS)/2$.
\end{proof}

Here is the announced descent of alignments along morphisms $\bar
\CS\to \CS$ in~$\SmPic_\BS$.
\begin{theo}
\label{thm:desc}%
Let $f:\bar{\CS}\to \CS$ be a morphism in~$\SmPic_\BS$. Let $L_1$ and
$L_2$ be line bundles on $\CS$ and let $\bar A:\,f^*L_1\alto f^*L_2$
be an alignment on~$\bar \CS$ (Definition~\ref{defi:align}). Suppose
that $[L_1]=[L_2]$ in $\Pic_\BS(\CS)/2$. Then there exists an alignment
$A:\,L_1\alto L_2$ on~$\CS$ and an isomorphism $f^*A\simeq\bar A$
on~$\bar \CS$ (see Definition~\ref{defi:al-cat} and~\ref{rema:al-f*}).
\end{theo}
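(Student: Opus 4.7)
My plan is to build $A$ in three stages: first produce \emph{some} alignment $A_0:L_1\alto L_2$ on~$\CS$; then measure the discrepancy between $f^*A_0$ and~$\bar A$ as an alignment $B:f^*L_2\alto f^*L_2$ on~$\bar\CS$, and realize $B$ as the pullback of an alignment $A_1:L_2\alto L_2$ on~$\CS$; finally take $A:=A_1\circ A_0$. The hardest step will be realizing~$B$ downstairs: its underlying line bundle will be $2$-torsion and therefore descends thanks to Lemma~\ref{lemm:chase}\eqref{it:2-tor}, but the \emph{defining isomorphism} of~$B$ only matches a pulled-back candidate up to a unit in~$\Gm(\bar\CS)$, and this unit is killed modulo squares precisely by axiom~\eqref{Gmhyp_item} of Definition~\ref{defi:SmPic}.

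For the first stage, I would combine the hypothesis $[L_1]=[L_2]$ in $\Pic_\BS(\CS)/2$ with the consequence $[f^*L_1]=[f^*L_2]$ in $\Pic(\bar\CS)/2$ of the existence of~$\bar A$, and invoke Lemma~\ref{lemm:chase}\eqref{it:desc} to obtain $[L_1]=[L_2]$ already in $\Pic(\CS)/2$. Some alignment $A_0:L_1\alto L_2$ on~$\CS$ then exists.

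For the second stage, Lemma~\ref{lemm:groupoid} applied to $f^*A_0$ and~$\bar A$ yields an alignment $B=(N,\psi):f^*L_2\alto f^*L_2$ on~$\bar\CS$ with $B\circ f^*A_0\simeq\bar A$. Since $\psi:N^{\otimes 2}\otimes f^*L_2\isoto f^*L_2$ forces $N^{\otimes 2}\cong\cO_{\bar\CS}$ after cancellation, $N$ lies in ${}_2\Pic(\bar\CS)$. Lemma~\ref{lemm:chase}\eqref{it:2-tor} then produces a $2$-torsion $N'\in\Pic(\CS)$ together with some isomorphism $\tau_0:f^*N'\isoto N$, and I pick any isomorphism $\alpha:(N')^{\otimes 2}\isoto\cO_\CS$, setting $A_1:=(N',\alpha\otimes\id_{L_2}):L_2\alto L_2$ on~$\CS$.

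To compare $f^*A_1$ with~$B$, I would transport $\psi$ along $\tau_0^{\otimes 2}\otimes\id_{f^*L_2}$ and compare with $f^*(\alpha\otimes\id_{L_2})$: both are isomorphisms $(f^*N')^{\otimes 2}\otimes f^*L_2\isoto f^*L_2$, so they differ by multiplication by a unit $u\in\Gm(\bar\CS)$. Replacing $\tau_0$ by $v\tau_0$ (with $v\in\Gm(\bar\CS)$) rescales $u$ by $v^2$, while replacing $\alpha$ by $s\alpha$ (with $s\in\Gm(\CS)$) rescales $u$ by $(f^*s)^{-1}$; hence the obstruction to $f^*A_1\simeq B$ lives in $\Gm(\bar\CS)/(\Gm(\bar\CS)^2\cdot f^*\Gm(\CS))$. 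Since $\pi_{\bar\CS}^*=f^*\circ\pi_\CS^*$, axiom~\eqref{Gmhyp_item} applied to~$\bar\CS$ makes this quotient trivial, so with appropriate choices of~$v$ and~$s$ I get $f^*A_1\simeq B$. Setting $A:=A_1\circ A_0:L_1\alto L_2$ and using the functoriality of Remark~\ref{rema:al-f*}, I then have $f^*A\simeq f^*A_1\circ f^*A_0\simeq B\circ f^*A_0\simeq\bar A$, as required.
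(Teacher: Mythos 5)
Your proof is correct and rests on precisely the same three ingredients as the paper's: Lemma~\ref{lemm:chase}\,\eqref{it:desc} to deduce $[L_1]=[L_2]$ in $\Pic(\CS)/2$, Lemma~\ref{lemm:chase}\,\eqref{it:2-tor} to descend the $2$-torsion discrepancy, and Assumption~\eqref{Gmhyp_item} of Definition~\ref{defi:SmPic} to kill the residual unit. It is therefore essentially the same approach, merely organized as a factorization $A=A_1\circ A_0$ that isolates the $2$-torsion and unit corrections in the self-alignment~$A_1$ (whose underlying bundle is exactly the paper's~$M''$), rather than as the paper's one-pass construction of $M=M'\otimes M''$ followed by a simultaneous rescaling of $\phi_0$ and~$\bar\tau_0$.
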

\begin{proof}
Write $\bar A=(\bar M,\bar \phi)$ so that $\bar \phi$ is an
isomorphism $\bar M\potimes{2}\otimes f^*L_1\isoto f^*L_2$ on~$\bar
\CS$. In particular, in the group $\Pic(\bar \CS)/2$, we have
$[f^*L_1]=[f^*L_2]$. Since we also assume $[L_1]=[L_2]$ in
$\Pic_\BS(\CS)/2$, we get by Lemma~\ref{lemm:chase}\,\eqref{it:desc}
that $[L_1]=[L_2]$ in $\Pic(\CS)/2$. Hence there exists a line bundle
$M'$ over~$\CS$ such that ${M'}\potimes{2}\otimes L_1\simeq L_2$.
Pulling-back to~$\bar \CS$, we get $(f^*M')\potimes{2}\simeq \bar
M\potimes{2}$ over~$\bar \CS$. Hence $[(f^*M')\otimes \bar
M\inv]\in{}_2\Pic(\bar \CS)\overset{\sim}{\leftarrow}{}_2\Pic(\CS)$ by
Lemma~\ref{lemm:chase}\,\eqref{it:2-tor}. Hence there exists
$[M'']\in{}_2\Pic(\CS)$ such that $f^*(M'\otimes M'')\simeq \bar M$.
Defining $M:=M'\otimes M''$ we now have
$$
f^*M\simeq \bar M\qquadtext{and also} M\potimes{2}\otimes L_1\simeq
L_2
$$
since ${M'}\potimes{2}\otimes L_1\simeq L_2$ already and
${M''}\potimes{2}$ is trivial.

Now choose two isomorphisms $\bar\tau_0: f^*M\isoto \bar M$ and
$\phi_0: M^{\otimes 2} \otimes L_1 \isoto L_2$, which exist by the
above construction, and consider the following diagram over~$\bar \CS$
\begin{equation}
\label{eq:comp}%
\vcenter{\xymatrix@C=4em@R=2em{
(f^*M)^{\otimes 2} \otimes f^*L_1 \ar@{=}[d]_-{}
 \ar[r]^-{\bar\tau_0^{\otimes 2}\otimes 1}
& \bar M^{\otimes 2} \otimes f^*L_1 \ar[d]^{\bar\phi}
\\
f^*(M^{\otimes 2} \otimes L_1) \ar[r]^-{f^*\phi_0} & f^*L_2\,.
}}
\end{equation}
A priori, this only commutes up to a unit $t\in\Gm(\bar{\CS})$, like
every diagram of isomorphisms of line bundles. By
Assumption~\eqref{Gmhyp_item}, $t=\pi_{\bar{\CS}}^*(u)\cdot v^2$ for
some $u\in\Gm(\BS)$ and $v\in\Gm(\bar{\CS})$. Hence we can define
$\phi=\pi_{\CS}^*(u)\cdot \phi_0$ and $\bar\tau=v^{-1}\cdot
\bar\tau_0$, so that $\phi$ and $\bar\tau$ make
Diagram~\eqref{eq:comp} strictly commute (in place of $\phi_0$ and
$\bar\tau_0$ respectively, of course).

Consider the alignment $A:=(M,\phi):\,L_1\alto L_2$ on~$\CS$. The
commutativity of~\eqref{eq:comp} precisely means $f^*A\simeq \bar A$
(see Remark~\ref{rema:al-f*}).
\end{proof}

\begin{coro}
\label{coro:desc}%
Let $f:\bar{\CS}\to \CS$ be a proper morphism in~$\SmPic_\BS$. Let $L_1$
and $L_2$ be line bundles on $\CS$ and $\bar A:\,\can_f\otimes
f^*L_1\alto \can_f\otimes f^*L_2$ be an alignment on~$\bar \CS$.
Suppose that $[L_1]=[L_2]$ in $\Pic_\BS(\CS)/2$. Then there exists an
alignment $A:\,L_1\alto L_2$ on~$\CS$ and
 an isomorphism $f^!A\simeq\bar A$
on~$\bar \CS$ (Remark~\ref{rema:al-f*}).
\end{coro}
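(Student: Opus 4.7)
The plan is to reduce this statement to Theorem~\ref{thm:desc} by using that $\can_f$ is invertible in the monoidal groupoid $\Alcat(\bar\CS)$, so that any alignment twisted by $\can_f$ can be untwisted. Concretely, I will strip off the common factor $\can_f$ from $\bar A$, descend the resulting alignment via Theorem~\ref{thm:desc}, and then reinstall the factor.

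First, by Lemma~\ref{lemm:invert} applied to $L=\can_f$ on~$\bar\CS$, the functor $\can_f\otimes(-)$ induces a bijection
$$
\Al_{\bar\CS}(f^*L_1,f^*L_2)\isotoo \Al_{\bar\CS}(\can_f\otimes f^*L_1,\can_f\otimes f^*L_2).
$$
Hence there exists an alignment $\bar B:f^*L_1\alto f^*L_2$ on~$\bar\CS$, unique up to isomorphism, such that $\id_{\can_f}\otimes \bar B\simeq \bar A$.

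Next, since $f:\bar\CS\to\CS$ is a morphism in~$\SmPic_\BS$ and we are given $[L_1]=[L_2]$ in $\Pic_\BS(\CS)/2$, Theorem~\ref{thm:desc} applies to the alignment $\bar B:f^*L_1\alto f^*L_2$ and produces an alignment $A:L_1\alto L_2$ on~$\CS$ together with an isomorphism $f^*A\simeq \bar B$ on~$\bar\CS$.

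Finally, I assemble the pieces. By the formula $f^!A=\id_{\can_f}\otimes f^*A$ recorded in Remark~\ref{rema:al-f*}, together with the fact from Definition~\ref{defi:al-op} that the tensor product of alignments preserves isomorphism classes, we obtain
$$
f^!A \;=\; \id_{\can_f}\otimes f^*A \;\simeq\; \id_{\can_f}\otimes \bar B \;\simeq\; \bar A,
$$
which is exactly what we wanted. There is essentially no obstacle here; the only thing to double-check is that Lemma~\ref{lemm:invert} genuinely produces a two-sided inverse at the level of alignment \emph{classes}, so that the untwisted $\bar B$ is well-defined up to~$\simeq$, but that is precisely what that lemma asserts.
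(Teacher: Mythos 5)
Your argument is correct and is exactly the paper's proof, just spelled out in detail: the paper also says the corollary is immediate from Theorem~\ref{thm:desc} because $f^!=\can_f\otimes f^*$ and $\bar B\mapsto\can_f\otimes\bar B$ is a bijection by Lemma~\ref{lemm:invert}. Nothing to change.
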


\begin{proof}
Immediate from Theorem~\ref{thm:desc} since $f^!= \can_f\otimes
f^*:\Alcat(\CS)\to \Alcat(\bar \CS)$ and since $\bar B\mapsto
\can_f\otimes\bar B$ is a bijection by Lemma~\ref{lemm:invert}.
\end{proof}

The following two results turn the above descent of line bundle
alignments into descent of alignment isomorphisms on the level of
Witt groups.

\begin{prop}
\label{prop:desc-pull}%
Let $f:\bar{\CS}\to \CS$ be a morphism of schemes in~$\SmPic_\BS$. Let
$L_1$ and $L_2$ be line bundles on $\CS$ with $[L_1]=[L_2]\in
\Pic_\BS(\CS)/2$. Let $\bar A_1:f^*L_1\alto \bar L$ and $\bar
A_2:f^*L_2\alto \bar L$ be two alignments with same target $\bar L$
on~$\bar \CS$. Then there exists an alignment $A:L_1\alto L_2$ on~$\CS$
such that the following diagram commutes
$$\xymatrix{
\Wcoh^*_Z(\CS,L_1) \ar[rd]_-{\displaystyle\pull{f}{\bar A_1}\ }
 \ar@{-->}[rr]^-{\displaystyle\alis{A}}
&& \Wcoh^*_Z(\CS,L_2) \ar[ld]^-{\ \displaystyle\pull{f}{\bar A_2}}
\\
& \Wcoh^*_Z(\bar \CS,\bar L)
}
$$
\end{prop}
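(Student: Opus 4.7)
The plan is to descend to $\CS$ a suitable alignment manufactured over $\bar\CS$ from $\bar A_1$ and $\bar A_2$, and then unpack the definitions to check commutativity.

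First I would form the composite alignment
$$
\bar A \;:=\; \bar A_2\inv \circ \bar A_1 \,:\ f^*L_1\alto f^*L_2
$$
on $\bar\CS$, where $\bar A_2\inv : \bar L \alto f^*L_2$ exists by Lemma~\ref{lemm:groupoid} since $\Alcat(\bar\CS)$ is a groupoid, and composition is as in Definition~\ref{defi:al-cat}. Since by hypothesis $[L_1]=[L_2]$ in $\Pic_\BS(\CS)/2$, Theorem~\ref{thm:desc} applies and produces an alignment $A:L_1\alto L_2$ on $\CS$ together with an isomorphism of alignments $f^*A\simeq \bar A$ on~$\bar\CS$. This $A$ is my candidate.

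To verify the diagram commutes, I would unfold the definitions of the lax pull-backs and chase:
$$
\pull{f}{\bar A_2}\circ \alis{A}
\;=\; \alis{\bar A_2}\circ f^*\circ \alis{A}
\;=\; \alis{\bar A_2}\circ \alis{(f^*A)}\circ f^*,
$$
where the first equality is Definition~\ref{defi:gen_pull} and the second is Corollary~\ref{coro:reasy}. Using $f^*A\simeq \bar A$ together with Proposition~\ref{prop:al-comp}\,\eqref{it:al-iso}, and then Proposition~\ref{prop:al-comp}\,\eqref{it:al-comp} applied to the composite $\bar A=\bar A_2\inv \circ \bar A_1$, the middle term becomes $\alis{\bar A_2}\circ \alis{\bar A_2\inv}\circ \alis{\bar A_1} = \alis{\bar A_1}$ (the functoriality implies that $\alis{\bar A_2\inv}$ is the inverse of $\alis{\bar A_2}$). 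This gives $\alis{\bar A_1}\circ f^* = \pull{f}{\bar A_1}$ by Definition~\ref{defi:gen_pull}, as wanted.

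There is no real obstacle: the proof is essentially a bookkeeping exercise using the groupoid structure of $\Alcat(\bar\CS)$, the descent provided by Theorem~\ref{thm:desc}, and the functoriality and naturality statements assembled in Proposition~\ref{prop:al-comp} and Corollary~\ref{coro:reasy}. The only nontrivial insight is guessing the correct alignment to descend, namely $\bar A_2\inv \circ \bar A_1$, which is forced by the diagram one wishes to make commute.
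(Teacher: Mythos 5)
Your proof is correct and takes essentially the same route as the paper: construct $\bar A := \bar A_2\inv\circ\bar A_1$ on $\bar\CS$ (the paper phrases this as choosing $\bar A$ with $\bar A_2\circ\bar A\simeq\bar A_1$ via Lemma~\ref{lemm:groupoid}), descend it to $A$ on $\CS$ by Theorem~\ref{thm:desc}, and then conclude via Corollary~\ref{coro:reasy} and Proposition~\ref{prop:al-comp}; the paper just compresses the final computation into a single invocation of Proposition~\ref{prop:gen-func} and Proposition~\ref{prop:al-comp}\,\eqref{it:al-iso}.
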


\begin{proof}
By Lemma~\ref{lemm:groupoid}, there exists an alignment $\bar
A:\,f^*L_1\alto f^* L_2$ on~$\bar \CS$ such that $\bar A_2\circ \bar
A\simeq \bar A_1$. By Theorem~\ref{thm:desc}, there exists
$A:L_1\alto L_2$ such that $f^*A\simeq \bar A$. So, $\bar A_2\circ
f^*A\simeq \bar A_1$. By Propositions~\ref{prop:gen-func}
and~\ref{prop:al-comp}\,\eqref{it:al-iso}, we have $\pull{f}{\bar
A_2}\circ\alis{A}=\pull{f}{\bar A_2\circ f^*A}=\pull{f}{\bar A_1}$.
\end{proof}

\begin{prop}
\label{prop:desc-push}%
Let $f:\bar{\CS}\to \CS$ be a proper morphism in~$\SmPic_\BS$. Let $L_1$
and $L_2$ be line bundles on~$\CS$ such that
$[L_1]=[L_2]\in\Pic_\BS(\CS)/2$. Let $\bar A_1:\bar L\alto \can_f\otimes
f^*L_1$ and $\bar A_2:\bar L\alto \can_f\otimes f^*L_2$ be two
alignments from the same source $\bar L$ on~$\bar \CS$. Then there
exists an alignment $A:L_1\alto L_2$ on~$\CS$ such that the following
diagram commutes
$$\xymatrix{
& \Wcoh^*_Z(\bar \CS,\bar L) \ar[ld]_-{\displaystyle\push{f}{\bar A_1}\ }
  \ar[rd]^-{\ \displaystyle\push{f}{\bar A_2}}
\\
\Wcoh^*_Z(\CS,L_1)
 \ar@{-->}[rr]^-{\displaystyle\alis{A}}
&& \Wcoh^*_Z(\CS,L_2)\,.
}
$$
\end{prop}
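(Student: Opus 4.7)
The plan is to mirror the proof of Proposition~\ref{prop:desc-pull}, with source and target of the relevant alignments interchanged and Theorem~\ref{thm:desc} replaced by its push-forward variant Corollary~\ref{coro:desc}.

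First, since $\bar A_1$ and $\bar A_2$ share the same source $\bar L$, Lemma~\ref{lemm:groupoid} furnishes an alignment
$$\bar B\,:\,\can_f\otimes f^*L_1\alto \can_f\otimes f^*L_2$$
on $\bar\CS$ satisfying $\bar B\circ \bar A_1\simeq \bar A_2$. Using the hypothesis $[L_1]=[L_2]$ in $\Pic_\BS(\CS)/2$, I would then apply Corollary~\ref{coro:desc} to $\bar B$ and obtain an alignment $A:L_1\alto L_2$ on $\CS$ together with an isomorphism $f^!A\simeq \bar B$; composing with $\bar A_1$ gives $(f^!A)\circ \bar A_1\simeq \bar A_2$ on $\bar\CS$.

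Finally, I would identify $\alis{A}$ with the lax push-forward $\push{\id_\CS}{A}$ along the identity (for which $\can_{\id_\CS}=\cO_\CS$ and $\dim(\id_\CS)=0$) and invoke Proposition~\ref{prop:gen-func} together with Proposition~\ref{prop:al-comp}\,\eqref{it:al-iso} to compute
$$\alis{A}\circ\push{f}{\bar A_1}\,=\,\push{f}{(f^!A)\circ \bar A_1}\,=\,\push{f}{\bar A_2}\,,$$
which is precisely the required commutativity.

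The only non-formal input is the descent statement Corollary~\ref{coro:desc}, which is where the standing hypotheses $\CS,\bar\CS\in\SmPic_\BS$ are genuinely used; the remainder is a direct transcription of the lax composition calculus of Section~\ref{se:gen-pp}. I do not anticipate any serious obstacle, since Corollary~\ref{coro:desc} was engineered precisely to make this push-forward descent proceed in exact parallel with its pull-back counterpart.
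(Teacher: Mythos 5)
Your argument is correct and is essentially identical to the paper's proof: the paper applies Corollary~\ref{coro:desc} directly to $\bar A_2\circ\bar A_1^{-1}$ (which is your $\bar B$ produced via Lemma~\ref{lemm:groupoid}) and then invokes Proposition~\ref{prop:gen-func} exactly as you do. The extra remark identifying $\alis{A}$ with $\push{\id_\CS}{A}$ is a useful clarification of why Proposition~\ref{prop:gen-func} applies, but does not change the route.
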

\begin{proof}
By Corollary~\ref{coro:desc} for $\bar A=\bar A_2\circ\bar A_1\inv$,
there exists $A:L_1\alto L_2$ such that $f^!A\circ\bar A_1\simeq
\bar A_2$. By Proposition~\ref{prop:gen-func},
$\alis{A}\circ\push{f}{\bar A_1}=\push{f}{f^!A\circ \bar
A_1}=\push{f}{\bar A_2}$.
\end{proof}

\begin{rema}
\label{rema:feel-good}%
Combined with Corollary~\ref{coro:reasy}, the last two propositions
tell us that, for a morphism $f:\bar \CS\to \CS$ in~$\SmPic_\BS$, it is
not so important to know where a lax pull-back $\pull{f}{\bar A}$ or
a lax push-forward $\push{f}{\bar A}$ exactly lands, as long as we
keep track of classes in $\Pic_\BS(?)/2$. Different choices can always
be ``realigned".
\end{rema}

\medskip
\section{Relative alignments and lax module structure}
\label{se:mock}%
\medskip

Now that we have a stable understanding of alignments, we introduce
a relative version of this notion, allowing a line bundle on the
base~$\BS$ to intervene.

\begin{defi}
\label{defi:X-aligned}%
Let $\pi_\CS:\CS\to \BS$ be a scheme over~$\BS$. We say that two line
bundles $L_1$ and $L_2$ over~$\CS$ are \emph{(quadratically)
$\BS$-aligned} if $[L_1]=[L_2]\in\Pic_\BS(\CS)=\coker(\pi_\CS^*:\Pic(\BS)\to
\Pic(\CS)/2)$. This amounts to the existence of a line bundle $K$
over~$\BS$ and an alignment $\pi_\CS^*K\otimes L_1\alto L_2$ as in
Definition~\ref{defi:align}.

By extension, it will be very convenient to say that a Witt class
$w\in \Wcoh_{\!Z}^*(\CS,L_1)$ is \emph{$\BS$-aligned with $L_2$} when
the line bundle $L_1$ is $\BS$-aligned with~$L_2$.

Specifying a line bundle $K$ over~$\BS$, we say that $L_1$ and $L_2$
are \emph{$K$-aligned} if $\pi_\CS^*K\otimes L_1$ is aligned
with~$L_2$. Unfolding everything, this means that there exists an
alignment $A=(M,\phi):\,\pi_\CS^*K\otimes L_1\alto L_2$, \ie an
isomorphism $\phi: M\potimes{2}\otimes \pi_\CS^*K\otimes L_1\isoto
L_2$. We call $A$ a \emph{$K$-alignment of $L_1$ with $L_2$} and use
the condensed notation
$$
A:\, L_1\alKto{K} L_2\,.
$$
\end{defi}

\begin{defi}
\label{defi:mock}%
Let $A\,:\ L_1\alKto{K}L_2$ be a $K$-alignment in~$\CS$. We are going
to define a \emph{lax product} or \emph{product realigned under~$A$}
$$
-\dt{A}-\ :\ \Wcoh^{i}(\BS,K)\times \Wcoh^{j}_{\!Z}(\CS,L_1)\too
\Wcoh^{i+j}_{\!Z}(\CS,L_2)\,.
$$
By Definition~\ref{defi:alis}, there exists an alignment isomorphism
$\alis{A}:\Wcoh^*_{\!Z}(\CS,\pi_\CS^*K\otimes
L_1)\isotoo\Wcoh^*_{\!Z}(\CS,L_2)$. Then for every Witt class $\lambda
\in \Wcoh^{i}(\BS,K)$ on the base and every Witt class $w\in
\Wcoh^{j}_{\!Z}(\CS,L_1)$ on~$\CS$, we define
$$
\lambda \dt{A} w\,:=\,\alis{A}\big(\pi_\CS^*(\lambda)\cdot w\big)
$$
for the image in~$\Wcoh^{i+j}_{\!Z}(\CS,L_2)$ of the product
$\pi_\CS^*(\lambda)\cdot w$, under the alignment
isomorphism~$\alis{A}$. We call this the \emph{lax-structure of
$\Wtot(\BS)$-module on $\Wtot_{\!Z}(\CS)$}.
\end{defi}

\begin{rema}
Taking $\lambda=1\in\Wper(\BS)$, we see that
$\lambda\dt{A}-=\alis{A}(-)$. So the above homomorphisms
$\lambda\dt{A}-$ generalize the alignment isomorphisms.
\end{rema}

This action behaves nicely with respect to all possible alignment
isomorphisms\,:
\begin{lemm}
\label{lemm:mult-al}%
Let $A:L_1\alKto{K}L_2$ be a $K$-alignment so that we have the
product $\dt{A}$ of Definition~\ref{defi:mock}. Let $B:L_2\alto
L_3$, $C:J\alto K$ and $D:L_0\alto L_1$ be (plain) alignments
over~$\CS$, $\BS$ and~$\CS$ respectively. Then $E:=B\circ
A\circ\big((\pi_\CS^*C)\otimes D\big)$ is a $J$-alignment
$L_0\alKto{J}L_3$ on~$\CS$. For every $\lambda\in \Wper^i(\BS,J)$ and
$w\in \Wcoh^j_{\!Z}(\CS,L_0)$ we have
$$
\alis{B}\Big(\alis{C}(\lambda)\dt{A}\alis{D}(w)\Big) =
\lambda\dt{E}w
$$
in~$\Wcoh^{i+j}_{\!Z}(\CS,L_3)$. In words, the lax product commutes
with lax-similitude.
\end{lemm}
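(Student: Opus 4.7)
The plan is to unfold both sides and reduce the equality to repeated application of the three formal properties of $\alis{(-)}$ already established: functoriality (Proposition~\ref{prop:al-comp}\,\eqref{it:al-comp}), monoidality (Proposition~\ref{prop:al-comp}\,\eqref{it:al-tens}) and naturality along $\pi_\CS^*$ (Corollary~\ref{coro:reasy}).

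First I would check that $E$ is really a $J$-alignment $L_0\alKto{J}L_3$: the tensor $(\pi_\CS^*C)\otimes D$ is a plain alignment $\pi_\CS^*J\otimes L_0\alto \pi_\CS^*K\otimes L_1$ by Definition~\ref{defi:al-op}, and composing with $A:\pi_\CS^*K\otimes L_1\alto L_2$ and $B:L_2\alto L_3$ lands in $L_3$, which is exactly what a $J$-alignment $L_0\alKto{J}L_3$ requires by Definition~\ref{defi:X-aligned}.

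Next I would compute the left-hand side in three steps. Starting inside, Corollary~\ref{coro:reasy} gives $\pi_\CS^*\alis{C}(\lambda)=\alis{(\pi_\CS^*C)}(\pi_\CS^*\lambda)$. Then the monoidal property~\eqref{it:al-tens} applied to the product with $\alis{D}(w)$ yields
$$
\pi_\CS^*\alis{C}(\lambda)\cdot \alis{D}(w) \;=\; \alis{(\pi_\CS^*C)\otimes D}\bigl(\pi_\CS^*\lambda\cdot w\bigr).
$$
Applying $\alis{A}$ and using functoriality~\eqref{it:al-comp} identifies $\alis{C}(\lambda)\dt{A}\alis{D}(w)$ with $\alis{A\circ((\pi_\CS^*C)\otimes D)}(\pi_\CS^*\lambda\cdot w)$. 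Finally applying $\alis{B}$ and again functoriality gives $\alis{E}(\pi_\CS^*\lambda\cdot w)$, which is the definition of $\lambda\dt{E}w$.

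There is no genuine obstacle: the only care needed is to match the two ``directions'' of the alignments ($A$ absorbs the base-twist on the right, while $B$, $C$, $D$ move classes across similitude) and to check that the composition order in $E$ is the one produced by the calculation. Once that bookkeeping is correct, the equality is a three-line diagram chase. I would present the computation as a single displayed chain of equalities, annotating each step with the property used, so that the reader can verify that only~\eqref{it:al-comp}, \eqref{it:al-tens} of Proposition~\ref{prop:al-comp} and Corollary~\ref{coro:reasy} are invoked.
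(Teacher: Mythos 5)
Your proof is correct and follows the same route as the paper's: unfold the definition of $\dt{A}$, push $\pi_\CS^*$ through $\alis{C}$ via Corollary~\ref{coro:reasy}, merge the two inner alignments by monoidality (Proposition~\ref{prop:al-comp}\,\eqref{it:al-tens}), then collapse the composition with $\alis{A}$ and $\alis{B}$ by functoriality (Proposition~\ref{prop:al-comp}\,\eqref{it:al-comp}). The only addition is your explicit check that $E$ is a $J$-alignment, which the paper leaves implicit; otherwise the argument is identical.
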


\begin{proof}
A direct computation\,:
\begin{equation*}
\begin{split}
\alis{B}\Big(\alis{C}(\lambda)\dt{A}\alis{D}(w)\Big) & \equalbydef
\alis{B}\circ\alis{A}\big(\pi_\CS^*(\alis{C}(\lambda))\cdot\alis{D}(w)\big)
\\
& \equalbyref{coro:reasy}
\alis{B}\circ\alis{A}\Big(\big(\alis{(\pi_\CS^*C)}(\pi_\CS^*(\lambda))\big)\cdot\alis{D}(w)\Big)
\\
& \equalby{\ref{prop:al-comp}\eqref{it:al-tens}}
\alis{B}\circ\alis{A}\circ\alis{\big((\pi_\CS^*C)\otimes
D\big)}\,(\pi_\CS^*(\lambda)\cdot w)
\\
& \equalby{\ref{prop:al-comp}\eqref{it:al-comp}}
\alis{\Big(B\circ A\circ\big((\pi_\CS^*C)\otimes
D\big)\Big)}\,(\pi_\CS^*(\lambda)\cdot w) \equalbydef
\lambda\dt{E}w\,. \qedhere
\end{split}
\end{equation*}
\end{proof}

The real question is whether this product $\lambda \dt{A} w$ depends
significantly on the $K$-alignment~$A:L_1\alto L_2$, for $L_1$ and
$L_2$ fixed. A priori, this might be the case. However, our class of
schemes~$\SmPic_\BS$ (Definition~\ref{defi:SmPic}) turns out to be
well-behaved.

\begin{lemm}
\label{lemm:move-coeff}%
Let $\CS$ be a scheme in~$\SmPic_\BS$. Let $A_i:L_1\alKto{K_i}L_2$ be
$K_i$-alignments over~$\CS$ (for the same $L_1$ and $L_2$), for
$i=1,2$. Then there exits an alignment $C:K_1\alto K_2$ on~$\BS$ such
that, for every $\lambda_1 \in \Wcoh^*(\BS,K_1)$ and every $w\in
\Wcoh^*_{\!Z}(\CS,L_1)$, we have
$$
\lambda_1 \dt{A_1} w = \lambda_2 \dt{A_2} w
$$
in $\Wcoh^*_{\!Z}(\CS,L_2)$, where
$\lambda_2=\alis{C}(\lambda_1)\in\Wcoh^*(\BS,K_2)$.
\end{lemm}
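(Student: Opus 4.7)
The plan is to reduce the assertion to an existence statement about alignments on the base~$\BS$, which can then be settled by Theorem~\ref{thm:desc}. The key bridge is Lemma~\ref{lemm:mult-al} applied with $B=\id_{L_2}$ and $D=\id_{L_1}$: for any candidate alignment $C:K_1\alto K_2$ on~$\BS$, setting $E:=A_2\circ\big((\pi_\CS^*C)\otimes\id_{L_1}\big)$, which is a $K_1$-alignment $L_1\alKto{K_1}L_2$, the lemma yields
$$
\alis{C}(\lambda_1)\dt{A_2}w\;=\;\lambda_1\dt{E}w.
$$
Since the lax product depends only on the isomorphism class of the alignment (by Proposition~\ref{prop:al-comp}\,\eqref{it:al-iso}), the desired identity $\lambda_1\dt{A_1}w=\lambda_2\dt{A_2}w$ will follow as soon as I exhibit a $C$ for which $E\simeq A_1$ as plain alignments $\pi_\CS^*K_1\otimes L_1\alto L_2$.

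First I locate the needed alignment upstairs on~$\CS$. Viewing $A_1$ and $A_2$ as plain alignments with common target~$L_2$, Lemma~\ref{lemm:groupoid} furnishes an alignment $B:\pi_\CS^*K_1\otimes L_1\alto\pi_\CS^*K_2\otimes L_1$ on $\CS$ with $A_2\circ B\simeq A_1$. Tensoring with~$L_1$ is a bijection on alignment isomorphism classes by Lemma~\ref{lemm:invert}, so there exists an alignment $\bar C:\pi_\CS^*K_1\alto\pi_\CS^*K_2$ on~$\CS$ with $\bar C\otimes\id_{L_1}\simeq B$, hence $A_2\circ(\bar C\otimes\id_{L_1})\simeq A_1$.

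The last step is to descend $\bar C$ along the structure morphism $\pi_\CS:\CS\to\BS$. This map is a morphism in $\SmPic_\BS$, since $\CS\in\SmPic_\BS$ by assumption and $\BS$ itself belongs to $\SmPic_\BS$. The hypothesis of Theorem~\ref{thm:desc} on $[K_1],[K_2]$ in $\Pic_\BS(\BS)/2$ is automatic, because $\pi_\BS=\id_\BS$ forces $\Pic_\BS(\BS)=0$. The theorem therefore produces an alignment $C:K_1\alto K_2$ on~$\BS$ with $\pi_\CS^*C\simeq\bar C$. Then
$$
E\;=\;A_2\circ\big((\pi_\CS^*C)\otimes\id_{L_1}\big)\;\simeq\;A_2\circ(\bar C\otimes\id_{L_1})\;\simeq\;A_1,
$$
which is exactly what the first paragraph required.

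The main obstacle is conceptual rather than computational: recognizing that the freedom to change the $K_i$-alignment is controlled by an alignment on the \emph{base}, and that the assumption $\CS\in\SmPic_\BS$ is precisely tailored to invoke Theorem~\ref{thm:desc} so as to pull an alignment from~$\CS$ down to~$\BS$. Once this viewpoint is adopted, all remaining work is bookkeeping with Lemmas~\ref{lemm:groupoid}, \ref{lemm:invert} and~\ref{lemm:mult-al}.
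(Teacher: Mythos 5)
Your proof is correct and follows essentially the same route as the paper's: construct the realigning alignment on~$\CS$ via Lemma~\ref{lemm:groupoid}, strip off the $L_1$ factor via Lemma~\ref{lemm:invert}, descend to~$\BS$ using the fact that $\pi_\CS:\CS\to\BS$ is a morphism in~$\SmPic_\BS$ and $\Pic_\BS(\BS)=0$, and conclude by Lemma~\ref{lemm:mult-al}. The one small difference is bibliographic: the paper cites Proposition~\ref{prop:desc-pull} for the descent step, whereas what is actually needed (the existence of $C$ with $\pi_\CS^*C\simeq\bar C$) is precisely the content of Theorem~\ref{thm:desc}, which you invoke directly — arguably the more accurate reference.
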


\begin{proof}
In terms of plain alignments over~$\CS$, note that
$A_1:\,\pi_\CS^*K_1\otimes L_1\alto L_2$ and $A_2:\,\pi_\CS^*K_2\otimes
L_1\alto L_2$ have the same target~$L_2$. By
Lemma~\ref{lemm:groupoid}, there exists an alignment
$A':\pi_\CS^*K_1\otimes L_1\alto \pi_\CS^*K_2\otimes L_1$ such that
$A_2\circ A'\simeq A_1$. Note that $L_1$ appears at the two ends
of~$A'$. So, by Lemma~\ref{lemm:invert}, there exists an alignment
$A'':\pi_\CS^*K_1\alto \pi_\CS^*K_2$ such that $A'\simeq A''\otimes
L_1$. Finally, by Proposition~\ref{prop:desc-pull} applied to
$f=\pi_\CS$, there exists an alignment $C:K_1\alto K_2$ such that
$A''\simeq\pi_\CS^*C$. The result follows from
Lemma~\ref{lemm:mult-al}, applied to our $C$, and to $\bar
B:=\id_{L_2}$, $\bar D:=\id_{L_1}$, $A:=A_2$, checking that $E$ is
here $A_2\circ((\pi_\CS^*C)\otimes L_1)\simeq A_2\circ (A''\otimes
L_1)\simeq A_2\circ A'\simeq A_1$.
\end{proof}

Let us a say a word about associativity of the lax-action.
\begin{lemm}
\label{lemm:assoc}%
Let $\CS$ be an $\BS$-scheme and $K_1$ and $K_2$ be line bundles
on~$\BS$. Set $K_3:=K_2\otimes K_1$. Consider $\BS$-alignments
$A_1:L_0\alKto{K_1}L_1$ and $A_2:L_1\alKto{K_2}L_2$ and
$A_3:L_0\alKto{K_3}L_2$ over~$\CS$. Then for any choice of two out
of~$A_1$, $A_2$ and $A_3$, the third can be constructed such that
the following diagram commutes in~$\Alcat(\CS)$\,:
\begin{equation}
\label{eq:assoc}%
\vcenter{\xymatrix{ \pi_\CS^*(K_2\otimes K_1)\otimes L_0
\ar@{~>}[rr]^-{1\otimes A_1}
 \ar@{~>}[rd]_-{A_3}
&& \pi_\CS^*K_2\otimes L_1 \ar@{~>}[ld]^-{A_2}
\\
& L_2
}}
\end{equation}
Then, for every $w\in\Wcoh^{j}_{\!Z}(\CS,L_0)$,
$\lambda_1\in\Wper^{i_1}(\BS,K_1)$ and
$\lambda_2\in\Wper^{i_2}(\BS,K_2)$, we have
$$
\lambda_2\dt{A_2}\big(\lambda_1\dt{A_1}w\big)=\big(\lambda_2\cdot\lambda_1\big)\dt{A_3}w
$$
in $\Wcoh^{i_1+i_2+j}_{\!Z}(\CS,L_2)$.
\end{lemm}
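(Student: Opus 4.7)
The plan is to first build the missing alignment using the groupoid and monoidal structure of $\Alcat(\CS)$, and then to verify the identity by unfolding the definition of $\dt{A}$ and chaining the three parts of Proposition~\ref{prop:al-comp}. The commutativity of diagram~\eqref{eq:assoc} translates to the relation $A_3\simeq A_2\circ (\id_{\pi_\CS^*K_2}\otimes A_1)$ in $\Alcat(\CS)$. If $A_1$ and $A_2$ are given, define $A_3$ by this formula. If $A_1$ and $A_3$ are given, set $A_2:=A_3\circ(\id\otimes A_1)\inv$, using that $\Alcat(\CS)$ is a groupoid (Lemma~\ref{lemm:groupoid}). If $A_2$ and $A_3$ are given, form $B:=A_2\inv\circ A_3:\pi_\CS^*K_2\otimes\pi_\CS^*K_1\otimes L_0\alto\pi_\CS^*K_2\otimes L_1$ and invoke Lemma~\ref{lemm:invert} to write $B\simeq\id_{\pi_\CS^*K_2}\otimes A_1$ for some alignment $A_1:\pi_\CS^*K_1\otimes L_0\alto L_1$, which is then the required $K_1$-alignment.

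For the identity, I unfold both sides from Definition~\ref{defi:mock}, getting
\[
\lambda_2\dt{A_2}\bigl(\lambda_1\dt{A_1}w\bigr)=\alis{A_2}\Bigl(\pi_\CS^*(\lambda_2)\cdot\alis{A_1}\bigl(\pi_\CS^*(\lambda_1)\cdot w\bigr)\Bigr)
\]
on the left and $\alis{A_3}\bigl(\pi_\CS^*(\lambda_2\cdot\lambda_1)\cdot w\bigr)$ on the right. Applying Proposition~\ref{prop:al-comp}\eqref{it:al-tens} to the alignment $\id_{\pi_\CS^*K_2}\otimes A_1$, and noting that $\alis{\id}$ is the identity (by functoriality in a groupoid), rewrites the inner part of the left side as
\[
\pi_\CS^*(\lambda_2)\cdot\alis{A_1}\bigl(\pi_\CS^*(\lambda_1)\cdot w\bigr)=\alis{\id\otimes A_1}\bigl(\pi_\CS^*(\lambda_2)\cdot\pi_\CS^*(\lambda_1)\cdot w\bigr).
\]
Since $\pi_\CS^*$ preserves the Witt ring product, the argument becomes $\pi_\CS^*(\lambda_2\cdot\lambda_1)\cdot w$. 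Applying $\alis{A_2}$ and fusing the two alignment isomorphisms via Proposition~\ref{prop:al-comp}\eqref{it:al-comp} yields $\alis{A_2\circ(\id\otimes A_1)}\bigl(\pi_\CS^*(\lambda_2\cdot\lambda_1)\cdot w\bigr)$. The commutativity $A_3\simeq A_2\circ(\id\otimes A_1)$ together with Proposition~\ref{prop:al-comp}\eqref{it:al-iso} then identifies this with the right-hand side, as required.

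The main obstacle is bookkeeping rather than substance: all the non-trivial content has already been packaged into Proposition~\ref{prop:al-comp}. The only slightly delicate point is the third case of the existence discussion, where I use Lemma~\ref{lemm:invert} to invert the map $\id_{\pi_\CS^*K_2}\otimes-$ on alignments; but this is immediate from invertibility of $\pi_\CS^*K_2$ in the monoidal groupoid $\Alcat(\CS)$.
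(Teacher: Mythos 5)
Your proof is correct and follows essentially the same route as the paper's: the existence of the third alignment from Lemmas~\ref{lemm:groupoid} and~\ref{lemm:invert}, and the identity by unfolding Definition~\ref{defi:mock} and chaining the three parts of Proposition~\ref{prop:al-comp} together with the fact that $\pi_\CS^*$ is a ring homomorphism. The only difference is the order in which you invoke the ring-homomorphism property and part~\eqref{it:al-comp} of Proposition~\ref{prop:al-comp}, which is immaterial.
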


\begin{proof}
The first part follows from Lemmas~\ref{lemm:groupoid}
and~\ref{lemm:invert}. The rest is direct\,:
$$
\begin{array}{rlcr}
\lambda_2\dt{A_2}\big(\lambda_1\dt{A_1}w\big)
& = \alis{A_2}\big(\pi_\CS^*\lambda_2\cdot\alis{A_1}(\pi_\CS^*\lambda_1\cdot w)\big)
& \text{by definition}
\\
& = \alis{A_2}\circ \alis{(1\otimes A_1)}\big(\pi_\CS^*\lambda_2\cdot\pi_\CS^*\lambda_1\cdot w\big)
& \text{by Proposition~\ref{prop:al-comp}\,\eqref{it:al-tens}}
\\
& = \alis{A_3}(\pi_\CS^*\lambda_2\cdot\pi_\CS^*\lambda_1\cdot w)
& \text{by~\eqref{eq:assoc} and Prop.~\ref{prop:al-comp}}
\\
& = \alis{A_3}\big(\pi_\CS^*(\lambda_2\cdot\lambda_1)\cdot w\big)
& \text{$\pi^*$ is a ring morphism}
\\
& = \big(\lambda_2\cdot\lambda_1\big)\dt{A_3}w
& \kern2em\text{by definition.}
\kern.5ex\qedhere
\end{array}
$$
\end{proof}

Let us now discuss the lax-linearity of lax pull-back and lax
push-forward.
\begin{lemm}
\label{lemm:pull-coeff}%
Let $f:\bar{\CS} \to \CS$ be a morphism in~$\SmPic_\BS$ and $Z\subset \CS$
be closed. Consider two lax pull-backs
(Definition~\ref{defi:gen_pull})\,:
$$
\pull{f}{\bar A}:\Wcoh^*_{\!Z}(\CS,L)\to\Wcoh^*_{\!f\inv Z}(\bar
\CS,\bar L)
\quadtext{and}
\pull{f}{\bar B}:\Wcoh^*_{\!Z}(\CS,M)\to\Wcoh^*_{\!f\inv Z}(\bar
\CS,\bar M)
$$
for two alignments $\bar A:f^*L\alto \bar L$ and $\bar B:f^*M\alto
\bar M$ over~$\bar \CS$. Suppose that the line bundles $L$ and $M$
over $\CS$ are $\BS$-aligned, \ie $[L]=[M]$ in $\Pic_\BS(\CS)/2$.
\begin{enumerate}[\indent(a)]
\item
\label{it:pull-easy}%
For every $K$-alignment $C:L\alKto{K}M$ over~$\CS$, there exists a
$K$-alignment $\bar C:\bar L\alKto{K}\bar M$ over~$\bar \CS$ such that
for all $\lambda\in \Wper^*(\BS,K)$ and all $w\in\Wcoh^*_{\!Z}(\CS,L)$
$$
\kern6em\pull{f}{\bar B}\big(\lambda\dt{C}w)=\lambda\dt{\bar
C}\big(\pull{f}{\bar A}(w)\big)\kern4em\text{in }\Wcoh^*_{\!f\inv
Z}(\bar \CS,\bar M)\,.
$$
\smallbreak
\item
\label{it:pull-hard}%
For every $K$-alignment $\bar C:\bar L\alKto{K}\bar M$ over~$\bar
\CS$, there exists a $K$-alignment $C:L\alKto{K}M$ over~$\CS$ such that
the very same equation holds (maybe better read from right to left
this time).
\end{enumerate}
\end{lemm}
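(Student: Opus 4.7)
The plan for part~(a) is to build $\bar C$ directly by composition in the groupoid $\Alcat(\bar\CS)$. Set
$$\bar C := \bar B\circ f^*C\circ \bigl(\id_{\pi_{\bar \CS}^*K}\otimes \bar A\inv\bigr),$$
which, using Definitions~\ref{defi:al-cat} and~\ref{defi:al-op} together with the fact that $f^*$ preserves composition and tensor (Remark~\ref{rema:al-f*}), is an alignment $\pi_{\bar\CS}^*K\otimes \bar L\alto \bar M$ on~$\bar\CS$, i.e.\ a $K$-alignment $\bar L\alKto{K}\bar M$. To verify the identity, I unfold both sides via Definition~\ref{defi:mock}. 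On the left, Corollary~\ref{coro:reasy} (naturality of $\alis{-}$ under $f^*$), the ring-morphism property of $f^*$ on Witt groups, and Proposition~\ref{prop:al-comp}\eqref{it:al-comp} give
$$\pull{f}{\bar B}\bigl(\lambda\dt{C}w\bigr)=\alis{\bar B\circ f^*C}\bigl(\pi_{\bar\CS}^*\lambda\cdot f^*w\bigr).$$
On the right, Proposition~\ref{prop:al-comp}\eqref{it:al-tens} applied to $\id_{\pi_{\bar\CS}^*K}$ and $\bar A$ absorbs $\alis{\bar A}$ into the product, yielding
$$\lambda\dt{\bar C}\bigl(\pull{f}{\bar A}(w)\bigr)=\alis{\bar C\circ(\id\otimes\bar A)}\bigl(\pi_{\bar\CS}^*\lambda\cdot f^*w\bigr).$$
By construction $\bar C\circ(\id\otimes\bar A)\simeq \bar B\circ f^*C$ (the two inner factors cancel by bifunctoriality of $\otimes$), so Proposition~\ref{prop:al-comp}\eqref{it:al-iso} concludes.

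For part~(b), the key step is to produce $C$ by \emph{descending} an auxiliary alignment from $\bar\CS$ to $\CS$ via Theorem~\ref{thm:desc}. Reversing the recipe of~(a), set
$$\bar C' := \bar B\inv\circ \bar C\circ \bigl(\id_{\pi_{\bar\CS}^*K}\otimes\bar A\bigr)\,:\ f^*\bigl(\pi_\CS^*K\otimes L\bigr)\alto f^*M.$$
Since $\pi_\CS^*K$ lies in the image of $\pi_\CS^*$, its class in $\Pic_\BS(\CS)/2$ vanishes, and the hypothesis $[L]=[M]$ in $\Pic_\BS(\CS)/2$ upgrades to $[\pi_\CS^*K\otimes L]=[M]$ in $\Pic_\BS(\CS)/2$. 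Theorem~\ref{thm:desc} therefore provides an alignment $C:\pi_\CS^*K\otimes L\alto M$ on~$\CS$, i.e.\ a $K$-alignment $C:L\alKto{K}M$, together with an isomorphism $f^*C\simeq\bar C'$. Feeding this $C$ into part~(a) gives a $K$-alignment $\bar C'':=\bar B\circ f^*C\circ(\id\otimes\bar A\inv)$ satisfying the required formula; a short chase (replace $f^*C$ by $\bar C'$ and cancel the pairs $\bar B\circ\bar B\inv$ and $\bar A\circ\bar A\inv$) shows $\bar C''\simeq \bar C$, and Proposition~\ref{prop:al-comp}\eqref{it:al-iso} transfers the formula from $\bar C''$ to $\bar C$ itself.

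Apart from routine bookkeeping inside $\Alcat$, the only substantive step is the descent in part~(b), which is exactly what Theorem~\ref{thm:desc} was built for and the reason the hypothesis $\CS,\bar\CS\in\SmPic_\BS$ is imposed.
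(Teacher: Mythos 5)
Your proof is correct and follows essentially the same route as the paper: in part~(a) you construct $\bar C$ by composing $\bar B\circ f^*C$ with $(\id\otimes\bar A)\inv$ in the groupoid $\Alcat(\bar\CS)$, and in part~(b) you descend the auxiliary alignment $\bar B\inv\circ\bar C\circ(\id\otimes\bar A)$ via Theorem~\ref{thm:desc}; both of these match the paper's ``solve the square'' constructions, and your computation unpacking $\pull{f}{\cdot}$ via Corollary~\ref{coro:reasy}, the ring-morphism property of $f^*$, and Proposition~\ref{prop:al-comp} is the same verification. The only cosmetic difference is that the paper isolates the single relation $\bar B\circ f^*C\simeq\bar C\circ(\id\otimes\bar A)$ once and does the computation once for both cases, whereas you do the chase for (a) and then transfer it to (b) via the extra cancellation $\bar C''\simeq\bar C$.
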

\begin{proof}
For~(a), use Lemma~\ref{lemm:groupoid} to construct $\bar C$ such
that $\bar B\circ f^*C\simeq\bar C\circ (\id\otimes\bar A)$, \ie
solve the following left-hand square\,:
$$
\xymatrix@R=1.5em{
\pi_{\bar \CS}^*K\otimes f^*L \ar@{~>}[r]^-{f^*C}
\ar@{~>}[d]_-{\id\otimes\bar A}
& f^*M \ar@{~>}[d]^-{\bar B}
\\
\pi_{\bar \CS}^*K\otimes \bar L \ar@{~>}[r]^-{\exists\,\bar C}
& \bar M
}
\kern5em
\xymatrix@R=1.5em{
\pi_{\bar \CS}^*K\otimes f^*L \ar@{~>}[r]^-{\exists\,\bar D}
\ar@{~>}[d]_-{\id\otimes\bar A}
& f^*M \ar@{~>}[d]^-{\bar B}
\\
\pi_{\bar \CS}^*K\otimes \bar L \ar@{~>}[r]^-{\bar C}
& \bar M
}
$$
For~(b), first solve the above right-hand square to find $\bar D$
and use Theorem~\ref{thm:desc} to find $C:\pi_\CS^*K\otimes L\alto M$
such that $\bar D\simeq f^*C$. In both cases we have
\begin{equation}
\label{eq:ABCD}%
\bar B\circ f^*C\simeq\bar C\circ (\id\otimes\bar A)\,.
\end{equation}
Then compute
\begin{equation*}
\begin{array}{rlc}
\pull{f}{\bar B}\big(\lambda\dt{C}w)
& = \alis{\bar B}\circ f^*\circ \alis{C}(\pi_\CS^*(\lambda)\cdot w)
& \textrm{by definition}
\\
& = \alis{\bar B}\circ \alis{(f^*C)}\circ f^*(\pi_\CS^*(\lambda)\cdot w)
& \textrm{by Corollary~\ref{coro:reasy}}
\\
& = \alis{\bar B}\circ \alis{(f^*C)}\big(\pi_{\bar \CS}^*(\lambda)\cdot f^*(w)\big)
& \textrm{$f^*$ is a ring homomorphism}
\\
& = \alis{\bar C}\circ \alis{(\id\otimes\bar A)}\big(\pi_{\bar
\CS}^*(\lambda)\cdot f^*(w)\big)
& \textrm{by~\eqref{eq:ABCD} and Proposition~\ref{prop:al-comp}}
\\
& = \alis{\bar
C}\big(\pi_{\bar \CS}^*(\lambda)\cdot (\alis{\bar A}\circ f^*(w))\big)
& \textrm{by Proposition~\ref{prop:al-comp}\,\eqref{it:al-tens}}
\\
& = \lambda\dt{\bar
C}\big(\pull{f}{\bar A}(w)\big)\,.
& \kern3em\textrm{by definition.}
\kern1.3em\qedhere
\end{array}
\end{equation*}
\end{proof}

\begin{lemm} \label{lemm:push-coeff}
Let $f:\bar \CS \to \CS$ be a proper morphism in~$\SmPic_\BS$, of constant
relative dimension, and $\bar Z\subset \bar \CS$ be closed. Consider
two lax push-forwards (Definition~\ref{defi:gen_push})\,:
$$
\push{f}{\bar A}:\Wcoh^\star_{\!\bar Z}(\bar \CS,\bar
L)\to\Wcoh^*_{\!f\bar Z}(\CS,L)
\quadtext{and}
\push{f}{\bar B}:\Wcoh^\star_{\!\bar Z}(\bar \CS,\bar
M)\to\Wcoh^*_{\!f\bar Z}(\CS,M)
$$
where $\star=*+\dim f$ for two alignments $\bar A:\bar L\alto
\can_f\otimes f^*L$ and $\bar B:\bar M\alto \can_f\otimes f^*M$
over~$\bar \CS$. Suppose that the line bundles $L$ and $M$ over
$\CS$ are $\BS$-aligned, \ie $[L]=[M]$ in $\Pic_\BS(\CS)/2$.
\begin{enumerate}
\item
For every $K$-alignment $C:L\alKto{K}M$ over~$\CS$, there exists a
$K$-alignment $\bar C:\bar L\alKto{K}\bar M$ over~$\bar \CS$ such
that for all $\bar w\in\Wcoh^{\star}_{\!\bar Z}(\bar \CS,\bar L)$
and $\lambda\in \Wper^*(\BS,K)$ we have
$$
\kern6em \lambda\dt{C}\big(\push{f}{\bar A}(\bar w)\big)
 = \push{f}{\bar B}\big(\lambda\dt{\bar C}\bar w)\kern4em\text{in
}\Wcoh^*_{\!f\bar Z}(\CS,M)\,.
$$
\smallbreak
\item
For every $K$-alignment $\bar C:\bar L\alKto{K}\bar M$ over~$\bar
\CS$, there exists a $K$-alignment $C:L\alKto{K}M$ over~$\CS$ such
that the very same property holds (read backwards).
\end{enumerate}
\end{lemm}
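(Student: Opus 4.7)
The plan is to dualize the approach of Lemma~\ref{lemm:pull-coeff}, exchanging pull-backs for push-forwards throughout. Unfolding the definitions of $\lambda\dt{C}-$ and $\push{f}{\bar A}-$, and using the projection formula (Remark~\ref{rema:reasy}\,\eqref{easy:7}--\eqref{easy:8}) to commute $\pi_\CS^*\lambda$ past $f_*$, together with Corollary~\ref{coro:reasy} to move $\alis{C}$ through $f_*$ (thus producing $\alis{f^!C}$), the identity reduces to showing that the two $\bar \CS$-alignments $\bar B\circ\bar C$ and $f^!C\circ(\id_{\pi_{\bar \CS}^*K}\otimes\bar A)$, both running from $\pi_{\bar \CS}^*K\otimes\bar L$ to $\can_f\otimes f^*M$ (up to the canonical swap $\pi_{\bar \CS}^*K\otimes\can_f\otimes f^*L\cong\can_f\otimes f^*(\pi_\CS^*K\otimes L)$), are isomorphic, \ie that the square
$$
\xymatrix@R=1.5em{
\pi_{\bar \CS}^*K\otimes \bar L \ar@{~>}[r]^-{\bar C}
 \ar@{~>}[d]_-{\id\otimes\bar A}
& \bar M \ar@{~>}[d]^-{\bar B}
\\
\pi_{\bar \CS}^*K\otimes\can_f\otimes f^*L \ar@{~>}[r]^-{f^!C}
& \can_f\otimes f^*M
}
$$
commutes in~$\Alcat(\bar \CS)$.

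For part~(1), given $C:L\alKto{K}M$ on~$\CS$, the existence of~$\bar C$ is formal: since $\Alcat(\bar \CS)$ is a groupoid (Lemma~\ref{lemm:groupoid}), I invert~$\bar B$ and set $\bar C:=\bar B\inv\circ f^!C\circ(\id\otimes\bar A)$, which is a $K$-alignment from~$\bar L$ to~$\bar M$ making the square commute by construction.

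For part~(2), the situation is dual and this is where the main obstacle lies: given~$\bar C$, there is no pre-existing~$C$ to pull back. The trick is to form the composite alignment $\bar D:=\bar B\circ\bar C\circ(\id\otimes\bar A)\inv$ on~$\bar \CS$, which (after the canonical swap) is an alignment $\can_f\otimes f^*(\pi_\CS^*K\otimes L)\alto \can_f\otimes f^*M$. Since $[L]=[M]$ in~$\Pic_\BS(\CS)/2$ by hypothesis and since $\pi_\CS^*K$ is trivial in~$\Pic_\BS(\CS)/2$, we have $[\pi_\CS^*K\otimes L]=[M]$ in~$\Pic_\BS(\CS)/2$, so Corollary~\ref{coro:desc} provides the desired alignment $C:\pi_\CS^*K\otimes L\alto M$ on~$\CS$, \ie $C:L\alKto{K}M$, with $f^!C\simeq\bar D$. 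The $\SmPic_\BS$ hypothesis is used crucially here.

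With the compatibility $\bar B\circ\bar C\simeq f^!C\circ(\id\otimes\bar A)$ in hand, the identity of the lemma falls out of a direct chain of rewrites parallel to Lemma~\ref{lemm:pull-coeff}: unfold both sides; apply the projection formula to move $\pi_\CS^*\lambda$ past~$f_*$; apply Corollary~\ref{coro:reasy} to bring $\alis{f^!C}$ inside~$f_*$; collapse the resulting composition of alignment isomorphisms on~$\bar \CS$ using Proposition~\ref{prop:al-comp}\,\eqref{it:al-iso}--\eqref{it:al-comp}; and re-read the expression as $\push{f}{\bar B}(\lambda\dt{\bar C}\bar w)$. The only non-formal ingredient is the descent via Corollary~\ref{coro:desc} in part~(2).
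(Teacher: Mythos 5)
Your proof is correct and follows essentially the same strategy as the paper: establish the commuting square $\bar B\circ\bar C\simeq f^!C\circ(\text{swap})\circ(\id\otimes\bar A)$ by the groupoid property in part~(1) and by Corollary~\ref{coro:desc} in part~(2), then conclude by a direct computation combining the projection formula, Corollary~\ref{coro:reasy} and Proposition~\ref{prop:al-comp}. The only cosmetic difference is that the paper bakes the canonical permutation $(12)$ into an auxiliary alignment $\bar A'$ rather than leaving it implicit as you do.
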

\begin{proof}
For~(a), use Lemma~\ref{lemm:groupoid} to construct $\bar C$ such
that the following left-hand square commutes\,:
$$
\xymatrix@R=1.5em{
\pi_{\bar \CS}^*K\otimes \bar L \ar@{~>}[r]^-{\exists\,\bar C}
\ar@{~>}[d]_-{\bar A':=(12)\circ(\id\otimes\bar A)}
& \bar M \ar@{~>}[d]^-{\bar B}
\\
\can_f\otimes \pi_{\bar \CS}^*K\otimes f^*L \ar@{~>}[r]^-{f^!C}
& \can_f\otimes f^* M
}
\kern2em
\xymatrix@R=1.5em{
\pi_{\bar \CS}^*K\otimes \bar L \ar@{~>}[r]^-{\bar C}
\ar@{~>}[d]_-{\bar A'}
& \bar M \ar@{~>}[d]^-{\bar B}
\\
\can_f\otimes \pi_{\bar \CS}^*K\otimes f^*L
\ar@{~>}[r]^-{\exists\,\bar D}
& \can_f\otimes f^* M
}
$$
For~(b), first solve the above right-hand square to find $\bar D$
and use Corollary~\ref{coro:desc} to find $C:\pi_\CS^*K\otimes L\alto
M$ such that $\bar D\simeq f^!\bar C$. In both cases we have
\begin{equation}
\label{eq:DCBA}%
\bar B\circ \bar C\simeq f^!C\circ \bar A'\,.
\end{equation}
Then compute
\begin{equation*}
\begin{array}{rl>{\hspace{-1.5ex}}c}
\push{f}{\bar B}\big(\lambda\dt{\bar C}\bar w)
& = f_*\circ \alis{\bar B}\circ \alis{\bar C}(\pi_{\bar \CS}^*(\lambda)\cdot \bar w)
& \textrm{by definition}
\\
& =  f_*\circ \alis{(f^!C)}\circ \alis{\bar A'}(\pi_{\bar \CS}^*(\lambda)\cdot \bar w)
& \textrm{by~\eqref{eq:DCBA} and Proposition~\ref{prop:al-comp}}
\\
& = \alis{C}\circ f_*\circ\alis{\bar A'}\big(\pi_{\bar \CS}^*(\lambda)\cdot \bar w\big)
& \textrm{by Corollary~\ref{coro:reasy}}
\\
& = \alis{C}\circ f_*\Big(\lbi{(12)}\big(\pi_{\bar \CS}^*(\lambda)\cdot \alis{\bar A}(\bar
w)\big)\Big)
& \textrm{by Proposition~\ref{prop:al-comp}\,\eqref{it:al-tens}}
\\
& = \alis{C}\Big(\pi_{\CS}^*(\lambda)\cdot f_*\big(\alis{\bar A}(\bar
w)\big)\Big)
& \textrm{by projection formula for $f_*$}
\\
& = \lambda\dt{C}\big(\push{f}{\bar A}(\bar w)\big)\,.
& \textrm{by definition.}
\end{array}
\end{equation*}
The permutation of line bundles $\lbi{(12)}$ in the fourth equation
is usually dropped but actually is the precise way to state the
projection formula.
\end{proof}

\begin{rema}
\label{rema:mock-les}%
Following up on Remark~\ref{rema:al-les}, it is easy to verify that
the lax module structure is compatible with the localization long
exact sequence.
\end{rema}

\medskip
\section{Total bases of the total Witt group}
\label{se:bases}%
\medskip

We want to define what should be a basis of the non-existent
total Witt group of~$\CS$ with support in~$Z$, over the similarly
evanescent total Witt group of~$\BS$. The intuitive meaning is simple.
We want every Witt class of $\Wcoh^*(\CS,L)$ to be a sum of
lax-products of elements of the basis by Witt classes over~$\BS$ (the
coefficients) and we want no linear relation among the Witt classes
in the basis, with coefficients over~$\BS$.

\begin{setup}
\label{w_i_setup}%
We will repeatedly use the following situation\,: Let
$L_1,\ldots,L_n$ be line bundles over an $\BS$-scheme $\CS$, let
$j_1,\ldots,j_n$ be integers and $Z\subset \CS$ a closed subset. We
consider Witt classes $w_1,\ldots,w_n$ where each $w_i \in
\Wcoh^{j_i}_{\!Z}(\CS,L_i)$ lives in its own Witt group of $\CS$ with
support in the common closed subset~$Z\subset \CS$. We want to make
sense of linear combinations of $w_1,\ldots,w_n$ with coefficients
in the Witt groups of~$\BS$. With the lax module structure of
Section~\ref{se:mock}, there are many ways to multiply each $w_i$ by
a coefficient $\lambda_i$ over~$\BS$. We clarify this first.
\end{setup}

\begin{defi} \label{defi:lin-comb}
Let $w_1,\ldots,w_n$ be Witt classes over~$\CS$ as in~\ref{w_i_setup}
and assume they are \emph{$\BS$-aligned} in the sense of
Definition~\ref{defi:X-aligned}, \ie $[L_1]=\cdots=[L_n]$
in~$\Pic_\BS(\CS)/2$.

A set of \emph{compatible coefficients} for $w_1\,,\ldots,w_n$
consists of two ingredients\,:
\begin{itemize}
\item Witt classes $\lambda_1
\in \Wcoh^{i_1}(X,K_1), \,\ldots\,, \lambda_n \in
\Wcoh^{i_n}(\BS,K_n)$ over~$\BS$, with the property that
$i_1+j_1=i_2+j_2=\cdots=i_n+j_n$; call this number~$k\in\bbZ$.
\smallbreak
\item
A $K_i$-alignment $C_i:L_i\alKto{K_i} L$
(Definition~\ref{defi:\BS-aligned}), for every~$i=1,\ldots,n$, for a
common line bundle~$L$ over~$\CS$; that is, a pair $C_i=(M_i,\phi_i)$
where $M_i$ is a line bundle on~$\CS$ and $\phi_i: M_i^{\otimes 2}
\otimes \pi_\CS^* K_i \otimes L_i \isotoo L$ is an isomorphism.
\end{itemize}
When $k\in\bbZ$ and the line bundle $L$ over~$\CS$ are specified in
advance, we speak of \emph{$(k,L)$-compatible coefficients}.
Naturally, we abbreviate all this by writing
that the ``$\lambda_1,\ldots,\lambda_n$ are compatible coefficients for
$w_1,\ldots,w_n$". We also use the mildly abusive notation
$\lambda_i w_i$ for $\lambda_i \dt{C_i} w_i$ when there is no risk
of confusion, but we insist that the alignments~$C_i$ come with the
coefficients $\lambda_i$'s in any case, possibly implicitly. This lax
product $\lambda_i \dt{C_i} w_i$ belongs to $\Wcoh^k(\CS,L)$. We then
define the \emph{lax linear combination} of the $w_1,\ldots,w_n$
with coefficients $\lambda_1,\ldots,\lambda_n$ as the following
element in~$\Wcoh^k_{\!Z}(\CS,L)$\,:
$$\sum \lambda_i w_i:=\sum \lambda_i\dt{C_i}w_i\,.
$$
\end{defi}

\begin{defi} \label{defi:tot-lin-indep}
Let $Z\subset \CS$ be closed and let $\Set$ be a set. A family $(w_\varS)_{\varS\in\Set}$ of Witt classes $w_\varS\in
\Wcoh^{j(\varS)}_Z(\CS,L_\varS)$ is called \emph{totally independent
over~$\BS$} if for every finite subset $\FSet$ of $\Set$ such that the $(w_\varF)_{\varF\in\FSet}$ are $\BS$-aligned and every
compatible coefficients~$(\lambda_\varF)_{\varF\in\FSet}$, the relation
$\sum_\FSet \lambda_\varF w_\varF=0$ forces all $\lambda_\varF$ to be zero.
\end{defi}

\begin{rema}
In the following definitions, we are going to use a subset $P$ of
$\Pic_\BS(\CS)/2$. The reader might want to assume at first that $P$ is
the whole~$\Pic_\BS(\CS)/2$ for this will often be the case. Allowing
other $P$'s will only become relevant when dealing with the
functorial behavior of these notions, and only in ``fringe cases".
If $[L]\in P$, we say that $L$ is \emph{$\BS$-aligned with $P$} and we
also say that every Witt class $w\in\Wcoh^*_{\!Z}(\CS,L)$ is
\emph{$\BS$-aligned with~$P$}. These conditions are empty for
$P=\Pic_\BS(\CS)$.
\end{rema}

\begin{defi}
\label{defi:tot-gen}%
Let $Z\subset \CS$ be closed and $P$ be a subset of $\Pic_\BS(\CS)/2$. Let
$(w_\varS)_{\varS\in\Set}$ be a family of Witt classes over~$\CS$ with support in~$Z$, which
are all $\BS$-aligned with~$P$. We say that $(w_\varS)_{\varS\in\Set}$ \emph{totally
generates the $P$-part of the Witt groups of $\CS$ with support
in~$Z$, over~$\BS$,} if for every line bundle $L$ over $\CS$ such that
$[L]\in P$, every integer~$k$ and every $y\in \Wcoh^k_{\!Z}(\CS,L)$,
there exists a finite subset $\FSet$ of $\Set$ such that $(w_\varF)_{\varF\in\FSet}$ are aligned with~$L$, and
$(k,L)$-compatible coefficients $(\lambda_\varF)_{\varF\in\FSet}$
over~$\BS$ such that $y=\sum_{\varF\in\FSet}\lambda_i w_i$ as in
Definition~\ref{defi:lin-comb}.
\end{defi}

\begin{defi} \label{defi:tot-basis}
Let $P\subset\Pic_\BS(\CS)/2$ and $Z\subset \CS$ closed. We say that a family
$(w_\varS)_{\varS\in\Set}$ of Witt classes $\BS$-aligned with $P$ forms a \emph{total
basis} of the $P$-part of the Witt groups of $\CS$ with support
in~$Z$, over~$\BS$, if it is totally independent
(Definition~\ref{defi:tot-lin-indep}) and totally generates
(Definition~\ref{defi:tot-gen}).
\end{defi}

\begin{exam}
For $Z=\CS=\BS$, the unit $1\in\Wcoh^0(\BS,\cO_\BS)$ is a total basis
over~$\BS$.
\end{exam}

\begin{rema}
Unlike the classical notion, total independence does not strictly imply uniqueness of coefficients in a linear combination; given totally independent classes $w_1,\ldots,w_n$, all $\BS$-aligned with
some line bundle~$L$, we could have
$\sum\lambda_i\dt{C_i}w_i=\sum\lambda_i'\dt{C_i'}w_i$ without
$\lambda_i=\lambda_i'$ for all~$i$. Equality only
follows from independence if the alignments $C_i$ and $C_i'$ are the same. However, Lemma \ref{lemm:move-coeff} tells us that if $\CS$ is in $\SmPic_\BS$, we can find an alignment $A_i:K_i\alto K'_i$ over $\BS$ for every $i$ such that $\lambda_i\dt{C_i} w_i = \lambda''_i \dt{C_i}w_i$ with $\lambda''_i=\alis{A_i}(\lambda'_i)$. Then, we must have $\lambda_i=\lambda''_i$ by total independence.
\end{rema}

Anyway, for $\BS$-schemes in our class~$\SmPic_\BS$ (Definition~\ref{defi:SmPic}),
we have the following ``classical" interpretation of a total
basis\,:
\begin{prop} \label{prop:classic-basis}
Let $\CS\in\SmPic_\BS$. Let $P\subset \Pic_\BS(\CS)/2$ be a subset,
$Z\subset \CS$ closed and let
$\left(w_\varS\in\Wcoh^{j_\varS}_Z(\CS,L_\varS)\right)_{\varS\in\Set}$
be a set of Witt classes on~$\CS$ with support in~$Z$, such that
each $[L_{\varS}]\in P$. For each $p\in P$ set
$\Set_p=\big\{\varS\in\Set\,\big|\,[L_\varS]=p\text{ in
}\Pic_\BS(\CS)/2\big\}$. Then the following properties are equivalent\,:
\begin{enumerate}[\indent(i)]
\item
The family $(w_\varS)_{\varS\in\Set}$ is a total basis of the $P$-part of the Witt groups of $\CS$
with support in~$Z$, over~$\BS$ (Definition~\ref{defi:tot-basis}).
\smallbreak
\item
For every line bundle $L$ with $[L]\in P$, every $k\in \bbZ$ and
\emph{for every choice}, for those $\varS\in \Set_{[L]}$, of a line
bundle $K_\varS$ over $\BS$ and a $K_\varS$-alignment
$C_\varS:L_\varS\alKto{K_\varS}L$, the following map is an
isomorphism
\begin{equation}
\label{eq:iso}%
\begin{array}{ccc}
\displaystyle\theta=\theta((C_\varS)_{\varS})\,:\hspace{-1ex}\
\bigoplus_{\varS\in\Set_{[L]}}\hspace{-1ex}
\Wcoh^{k-j_\varS}(\BS,K_\varS) & \isotoo & \Wcoh^k_{\!Z}(\CS,L)
\\
(x_\varS)_{\varS\in\Set_{[L]}} \kern-10em & \longmapsto & \sum
x_\varS \dt{C_\varS}w_\varS\,.
\end{array}
\end{equation}
\smallbreak
\item For every class $p\in P$ and every
$k\in\bbZ$, there exists a choice of $L\in p$ and \emph{there exists
a choice}, for each $i\in\Set_p$, of a line bundle $K_\varS$ over
$\BS$ and a $K_\varS$-alignment $C_\varS:L_\varS\alKto{K_\varS}L$
for which~\eqref{eq:iso} is an isomorphism.
\end{enumerate}
Note that $\theta$ as in~\eqref{eq:iso} is always a homomorphism of
$\Wper(\BS)$-modules by Lemma~\ref{lemm:assoc}.
\end{prop}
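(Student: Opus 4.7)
The plan is to prove $(i) \Leftrightarrow (ii) \Leftrightarrow (iii)$ cyclically, as $(i) \Rightarrow (ii) \Rightarrow (iii) \Rightarrow (i)$. The implication $(ii) \Rightarrow (iii)$ is immediate by specialization: for each $(p,k) \in P \times \bbZ$, pick any $L \in p$ and any alignments $C_\varS$, and apply (ii).

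For $(i) \Rightarrow (ii)$, fix $L$ with $[L] \in P$, $k \in \bbZ$, and alignments $C_\varS: L_\varS \alKto{K_\varS} L$; I show that $\theta$ is bijective. Surjectivity follows from total generation: given $y \in \Wcoh^k_{\!Z}(\CS,L)$, total generation yields a finite $\FSet \subset \Set_{[L]}$ together with $(k,L)$-compatible coefficients $\lambda_\varF$ and auxiliary alignments $C'_\varF: L_\varF \alKto{K'_\varF} L$ with $y = \sum_\varF \lambda_\varF \dt{C'_\varF} w_\varF$. Since $\CS \in \SmPic_\BS$, Lemma~\ref{lemm:move-coeff} provides, for each $\varF$, an alignment $E_\varF: K'_\varF \alto K_\varF$ on $\BS$ satisfying $\lambda_\varF \dt{C'_\varF} w_\varF = \alis{E_\varF}(\lambda_\varF) \dt{C_\varF} w_\varF$. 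Setting $x_\varF := \alis{E_\varF}(\lambda_\varF)$ on $\FSet$ and $x_\varS := 0$ elsewhere then gives $y = \theta((x_\varS))$. Injectivity of $\theta$ is immediate from total independence applied to the (finite) support of any element in the kernel.

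The main obstacle is $(iii) \Rightarrow (i)$: one has to bootstrap from a single iso-giving choice $(L_0, \{C_\varS^0: L_\varS \alKto{K_\varS^0} L_0\})$ per $(p,k)$ to total generation and total independence in their full strength. Given arbitrary $L \in p$, choose a $K_B$-alignment $B: L_0 \alKto{K_B} L$ (which exists since $[L_0] = [L]$ in $\Pic_\BS(\CS)/2$) and form the composite alignments $\tilde C_\varS := B \circ (\id_{\pi_\CS^*K_B} \otimes C_\varS^0): L_\varS \alKto{K_B \otimes K_\varS^0} L$. Using the functoriality and monoidality of $\alis{(-)}$ from Proposition~\ref{prop:al-comp}, one factors $\theta^{L, \{\tilde C_\varS\}} = \alis{B} \circ \Xi$, where $\Xi: \bigoplus_\varS \Wcoh^{k-j_\varS}(\BS, K_B \otimes K_\varS^0) \to \Wcoh^k_{\!Z}(\CS, \pi_\CS^*K_B \otimes L_0)$ is the analogue of $\theta_0$ twisted by $K_B$. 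Lemma~\ref{lemm:move-coeff} then reduces the isomorphism statement for an arbitrary chosen $C_\varS$ to the isomorphism statement for $\tilde C_\varS$, and hence to that of $\Xi$.

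The hard technical point is deducing that $\Xi$ is an isomorphism from the assumption that $\theta_0$ is: this is precisely the transport of a lax-basis across a $\BS$-twist. The argument uses Lemma~\ref{lemm:assoc} to match the lax $\Wper(\BS)$-module structures on both sides, together with the structural consequences of $\CS \in \SmPic_\BS$ collected in Lemma~\ref{lemm:chase}, which control the ambiguity in $K_B$ modulo squares on $\BS$. Once $\Xi$ is shown to be an isomorphism, total generation (from surjectivity) and total independence (from injectivity) follow simultaneously for every $L \in p$ and every choice of alignments, completing the cycle.
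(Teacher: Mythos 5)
Your treatment of $(i)\Rightarrow(ii)$ and $(ii)\Rightarrow(iii)$ matches the paper: injectivity from total independence, surjectivity from total generation combined with Lemma~\ref{lemm:move-coeff} to realign auxiliary coefficients to the given~$C_\varS$, and $(ii)\Rightarrow(iii)$ by specialization.

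Your $(iii)\Rightarrow(i)$, however, contains a genuine gap and moreover takes a different (and more difficult) route than the paper. You attempt $(iii)\Rightarrow(ii)$ by transporting the isomorphism $\theta_0$ (built from the chosen $L_0$) across a $K_B$-twist to a map $\Xi$ landing in $\Wcoh^k_{\!Z}(\CS,\pi_\CS^*K_B\otimes L_0)$, but you never actually prove that $\Xi$ is an isomorphism. You flag it as ``the hard technical point'' and assert that it ``uses'' Lemma~\ref{lemm:assoc} and Lemma~\ref{lemm:chase} without giving any argument. Note that $\theta_0$ and $\Xi$ are maps between \emph{different} abelian groups (the source and target are twisted by~$K_B$, which need not vanish in $\Pic(\BS)/2$), and there is no a priori comparison between $\Wcoh^*(\BS,K_\varS^0)$ and $\Wcoh^*(\BS,K_B\otimes K_\varS^0)$, nor between $\Wcoh^k_{\!Z}(\CS,L_0)$ and $\Wcoh^k_{\!Z}(\CS,\pi_\CS^*K_B\otimes L_0)$, so bijectivity does not transport for free. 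As written, the cycle is not closed: you've reduced $(iii)\Rightarrow(i)$ to an unproven lemma at least as hard as the original implication.

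The paper's proof of $(iii)\Rightarrow(i)$ works differently and sidesteps the isomorphism-transport problem entirely. It verifies total generation and total independence directly. For independence, given a vanishing lax linear combination with target~$L'$, one picks the reference $L$, $K_\varF$, $C_\varF$ guaranteed by~(iii) for $p=[L']$, chooses an alignment $A:L'\alto L$, and applies $\alis{A}$ to the relation; Lemmas~\ref{lemm:mult-al} and~\ref{lemm:move-coeff} then rewrite each term as $x_\varF\dt{C_\varF}w_\varF$ with $x_\varF$ lax-similar to $\lambda_\varF$, so that $\theta((x_\varF))=0$ forces all $x_\varF=0$ and hence all $\lambda_\varF=0$. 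In other words, the paper transports the \emph{element} (a particular vanishing combination) to the one reference twist where $\theta$ is known to be an isomorphism, rather than transporting the \emph{isomorphism} $\theta_0$ to all twists. To repair your argument you would need either to adopt the paper's element-transport strategy, or to actually supply a proof that $\Xi$ is an isomorphism, which you currently do not have.
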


\begin{proof}
$(i)\Rightarrow(ii)$\,: Injectivity is straightforward from total
independence (Definition~\ref{defi:tot-lin-indep}). For
surjectivity, let $y\in \Wcoh^k_{\!Z}(\CS,L)$ and use total
generation (Definition~\ref{defi:tot-gen}) to write $y$ as
$\sum_{\varF\in\FSet}\lambda_i\dt{C_i}w_i$ for some finite subset
$\FSet\subset\Set$, some coefficients $\lambda_i\in
\Wper^{k-j_\varF}(\BS,J_\varF)$ and some $J_\varF$-alignments
$D_\varF:L_\varF\alKto{J_\varF}L$, for $\varF\in\FSet$. A priori,
$J_\varF$ might differ from $K_\varF$ and $D_\varF$ might differ
from $C_\varF$. But Lemma~\ref{lemm:move-coeff} tells us that each
$\lambda_\varF\dt{D_\varF}w_\varF=x_\varF\dt{C_\varF}w_\varF$ for a
suitable $x_\varF\in \Wper^*(\BS,K_\varF)$ lax-similar
to~$\lambda_\varF$ . Hence
$y=\sum_{\varF\in\FSet}x_\varF\dt{C_\varF}w_\varF\in\im(\theta)$.
So, $\theta$ is surjective.

$(ii)\Rightarrow(iii)$\,: Do choose $L\in p$ and $\BS$-alignments
$C_\varF:L_\varF\alKto{K_\varF}L$ for~$\varS\in\Set_p$.

$(iii)\Rightarrow(i)$\,: Total generation is immediate from
surjectivity of~$\theta$ and Lemma~\ref{lemm:move-coeff}. For total
independence, let $(w_\varF)_{\varF\in\FSet}$ be $\BS$-aligned (as
in~\ref{w_i_setup}) and let $(\lambda_\varF)_{\varF\in\FSet}$ be
$(L',k)$-compatible coefficients for some~$L'$, such that
$\sum_{\varF\in\FSet}\lambda_\varF\dt{D_\varF}w_\varF=0$ for suitable alignments~$D_\varF$.
Note that $[L']\in P$. Choose $L$, $K_\varF$ and $C_\varF$ as in~(iii) for
$p=[L']$. Choose also $A:L'\alto L$. Then
$\alis{A}\big(\sum_{\varF\in\FSet}\lambda_\varF\dt{D_\varF}w_\varF\big)=0$ as well. By
Lemmas~\ref{lemm:mult-al} and~\ref{lemm:move-coeff} again, each
$\alis{A}(\lambda_\varF\dt{D_\varF}w_\varF)=x_\varF\dt{C_\varF}w_\varF$ for some $x_\varF$ lax-similar to~$\lambda_\varF$. We then get $\theta((x_\varF)_{\varF\in\FSet})=0$ which
forces all $x_\varF=0$ by injectivity of~$\theta$. But then
$\lambda_\varF=0$ as well since alignment isomorphisms are...
isomorphisms.
\end{proof}

\begin{lemm} \label{lemm:union}
Let $P$ and $P'$ be subsets of $\Pic_\BS(\CS)/2$ and let
$(w_\varS)_{\varS\in\Set}$ (resp.\ $(w_\varS)_{\varS\in\Set'}$\kern-0.5ex) be a
totally generating family of the $P$-part (resp.\ the $P'$-part) of
the Witt groups of $\CS$ with support in $Z$, over~$\BS$. Then the
union family $(w_\varS)_{\varS\in\Set\cup\Set'}$ is a totally
generating family of the $P\cup P'$-part of the Witt groups of $\CS$
over~$\BS$. If $P$ and $P'$ are disjoint and if the families
$(w_\varS)_{\varS\in\Set}$ and $(w_\varS)_{\varS\in\Set'}$ are both
totally independent, then their union is totally independent.
\end{lemm}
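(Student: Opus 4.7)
My plan is to handle total generation and total independence separately; both are essentially formal and reduce to a case analysis on which of $P$ or $P'$ contains the relevant Picard class, followed by an appeal to the corresponding hypothesis.

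For total generation of the $P\cup P'$-part, I would start from a line bundle $L$ on~$\CS$ with $[L]\in P\cup P'$, an integer~$k$, and a Witt class $y\in\Wcoh^k_{\!Z}(\CS,L)$. Since $[L]$ lies in $P$ or in $P'$, I would apply total generation of the respective subfamily $(w_\varS)_{\varS\in\Set}$ or $(w_\varS)_{\varS\in\Set'}$ to produce a finite subset $\FSet$ of $\Set$ (resp.\ of $\Set'$) and compatible coefficients $(\lambda_\varF)_{\varF\in\FSet}$ expressing $y=\sum_{\varF\in\FSet}\lambda_\varF w_\varF$; since $\FSet\subseteq\Set\cup\Set'$, this is exactly what Definition~\ref{defi:tot-gen} demands for the union family.

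For the second assertion, assume $P\cap P'=\emptyset$ and that both subfamilies are totally independent. Given a finite subset $\FSet\subseteq\Set\cup\Set'$ such that $(w_\varF)_{\varF\in\FSet}$ are $\BS$-aligned, together with compatible coefficients $(\lambda_\varF)_{\varF\in\FSet}$ satisfying $\sum_{\varF\in\FSet}\lambda_\varF w_\varF=0$, the $\BS$-alignment condition means that every $[L_\varF]$ equals a single common class $p\in\Pic_\BS(\CS)/2$. The key observation is that $\FSet$ cannot meet both $\Set$ and $\Set'$, because any $\varF\in\FSet\cap\Set$ would force $p\in P$ and any $\varF\in\FSet\cap\Set'$ would force $p\in P'$, contradicting disjointness $P\cap P'=\emptyset$. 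Thus $\FSet$ lies entirely in one of the two index sets, and the total independence of the corresponding subfamily forces each $\lambda_\varF$ to vanish.

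There is no genuine obstacle here: the only point worth checking is that the data of ``compatible coefficients'' in Definition~\ref{defi:lin-comb} depends only on the Witt classes $w_\varF$ themselves and on the chosen alignments $C_\varF$ to a common target~$L$, not on any ambient index set, so the relation $\sum\lambda_\varF w_\varF=0$ and the vanishing conclusions transfer unambiguously between the union family and either of its restrictions to $\Set$ or~$\Set'$.
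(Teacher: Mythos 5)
Your proof is correct and fills in exactly the routine case analysis that the paper's terse "Clear" leaves implicit; in particular, you have correctly isolated the one non-trivial observation, namely that in the independence part the disjointness $P\cap P'=\emptyset$ forces any finite $\BS$-aligned subfamily to lie entirely inside $\Set$ or entirely inside~$\Set'$, so the hypothesis on each subfamily applies. Nothing to add.
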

\begin{proof}
Clear.
\end{proof}

\begin{rema}
\label{rema:4-per}%
Given $i\equiv j$ modulo~4, there is a canonical isomorphism
$\Wcoh^i\isoto \Wcoh^j$, given by $(\Sigma^2)^{\frac{j-i}{4}}$,
which involves no choice and no sign. Moreover, this isomorphism
commutes with every pull-back, push-forward, alignment isomorphism
and products (still no sign because $\frac{j-i}{2}$ is even). In
other words, a Witt class $w\in\Wcoh^i$ corresponds to a unique Witt
class of~$\Wcoh^j$.

If one has an $\BS$-scheme $\CS$ and a family of Witt classes
on~$\CS$, one can wonder whether the notions of total independence and
total generation (Definitions~\ref{defi:tot-lin-indep}
and~\ref{defi:tot-gen}) would be different if one identified every
Witt class $w\in \Wcoh^i$ with its image in $\Wcoh^j$, for $j\equiv
i$ modulo~4. The answer is no, as long as one does the same on~$\BS$.

Indeed,
$\Sigma^2\big(\lambda\dt{A}w\big)=\big(\Sigma^2(\lambda)\big)\dt{A}w$.
Hence every occurrence of $\Sigma^2$ on $\CS$ can be ``absorbed" in
the coefficients.

The following analogy might help the puzzled reader. If
$R=\oplus_{i\in \bbZ}R^i$ is a $\bbZ$-graded ring and
$M=\oplus_{i\in\bbZ} M^i$ is a graded $R$-module (or $R$-algebra),
and if there exists $s\in R^4$ invertible and central (nothing
special about~4, of course), then one can consider the
$\bbZ/4$-graded ring $\bar R=\oplus_{[i]\in \bbZ/4}R^i$ with $0\leq
i\leq 3$ and the graded $\bar R$-module $\bar M=\oplus_{[i]\in
\bbZ/4}M^i$ with $0\leq i\leq 3$, where a product taking values
outside of the range $0\leq i\leq 3$ is brought back in that range
by using the \emph{unique} power of $s$ which does the job. The
point is that a collection $\mathcal{M}\subset \cup_{i\in\bbZ}M^i$
of homogeneous elements in~$M$ form an $R$-basis of~$M$ if and only
if the \emph{very same} collection forms an $\bar R$-basis of~$\bar
M$ (once brought back in the range $0\leq i\leq 3$). In particular
$\bar M$ has the same dimension over~$\bar R$ as $M$ had over~$R$.
The only possible confusion would come from the perverse
contemplation of  $\bar M$ as an $R$-module.
\end{rema}

Let us now examine how these notions behave under pull-backs or 
push-forwards. The assumption about injectivity of $f^*\restr{P}$
below is the very reason we allow the flexibility of those subsets
$P\subset\Pic_\BS(\CS)$, see Remark~\ref{rema:chunk}.

\begin{prop} \label{prop:pull-basis}
Let $f: \bar{\CS} \to \CS$ be a morphism of schemes in~$\SmPic_\BS$. Let
$P$ be a subset of $\Pic_\BS(\CS)/2$ and $Z\subset \CS$ closed. Suppose
that the pull-back map $f^*|_P:P \to \Pic_\BS(\bar{\CS})/2$ is
injective, as a map of sets. Suppose also that the pull-back
$f^*:\Wcoh^k_{\!Z}(\CS,L) \to \Wcoh^k_{f\inv(Z)}(\bar{\CS},f^*L)$ is an
isomorphism for all~$L$ with $[L]\in P$ and all~$k\in\bbZ$.

Let $(w_\varS)_{\varS\in\Set}$ be a set of Witt classes
$w_\varS\in\Wcoh^{j_\varS}_{\!Z}(\CS,L_\varS)$ over~$\CS$ with
support in~$Z$, with all~$[L_\varS]\in P$. Choose for every $\varS$
an alignment $\bar A_\varS:f^*L_\varS\alto \bar L_\varS$ over~$\bar
\CS$, hence a lax pull-back $\pull{f}{\bar
A_\varS}:\Wcoh^{j_\varS}_{\!Z}(\CS,L_\varS) \to
\Wcoh^{j_\varS}_{f\inv(Z)}(\bar{\CS},\bar L_\varS)$. Let $\bar
w_\varS:=\pull{f}{\bar A_\varS}(w_\varS)$.

Then $(w_\varS)_{\varS\in\Set}$ is a total basis of the $P$-part of
the Witt group of~$\CS$ with support in~$Z$, over~$\BS$, if and only
if $(\bar w_\varS)_{\varS\in\Set}$ is a total basis of the $f^*
P$-part of the Witt group of~$\bar \CS$ with support in~$f\inv Z$,
over~$\BS$.
\end{prop}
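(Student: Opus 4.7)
The plan is to reduce to Proposition~\ref{prop:classic-basis}, whose part~(iii) characterizes total bases by the isomorphy of the assembly map~$\theta$, and to match the two assembly maps via $f^*$.

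First I would use injectivity of $f^*|_P$ to establish a bijection $P \isoto f^*P$ and, for each $p \in P$, the identification $\Set_p = \bar\Set_{f^*p}$ of index sets. Indeed, since the chosen alignment $\bar A_\varS:f^*L_\varS\alto\bar L_\varS$ yields $[\bar L_\varS]=f^*[L_\varS]$ in $\Pic_\BS(\bar\CS)/2$ and each $[L_\varS]\in P$, injectivity of $f^*|_P$ pins $[L_\varS]$ down uniquely from $f^*[L_\varS]$. So the sources of $\theta$ on $\CS$ and $\bar\theta$ on $\bar\CS$ are parametrized by matching index sets, piece by piece on $p\in P$.

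Next, for a fixed $p\in P$, pick $L$ with $[L]=p$, pick $k\in\bbZ$, and take $\bar L:=f^*L$ on the bar side (so $\bar B:=\id_{f^*L}$ realizes $\pull{f}{\bar B}=f^*$). The technical core is Lemma~\ref{lemm:pull-coeff}: part~(a) converts any $K_\varS$-alignment $C_\varS:L_\varS\alKto{K_\varS}L$ on $\CS$ into a $K_\varS$-alignment $\bar C_\varS:\bar L_\varS\alKto{K_\varS}f^*L$ on $\bar\CS$ satisfying
$$
f^*(\lambda\dt{C_\varS}w_\varS)=\lambda\dt{\bar C_\varS}\bar w_\varS,
$$
whereas part~(b) produces $C_\varS$ from any preassigned $\bar C_\varS$ with the same compatibility. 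Summing over $\varS\in\Set_p=\bar\Set_{f^*p}$ yields, in either direction, the identity $\bar\theta=f^*\circ\theta$ of $\Wper(\BS)$-linear maps.

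The hypothesis that $f^*:\Wcoh^k_Z(\CS,L)\isoto\Wcoh^k_{f\inv Z}(\bar\CS,f^*L)$ is an isomorphism for every such $L$ and $k$ then transfers isomorphy: $\theta$ is iso if and only if $\bar\theta$ is iso. Concretely, for the ``only if'' direction I would invoke characterization~(ii) of Proposition~\ref{prop:classic-basis} on $\CS$ to get $\theta$ iso for any choice of $C_\varS$, lift to $\bar C_\varS$ via Lemma~\ref{lemm:pull-coeff}(a), and conclude by characterization~(iii) on $\bar\CS$. For the ``if'' direction I would use~(ii) on the bar side to get $\bar\theta$ iso for any choice of $\bar C_\varS$, descend to $C_\varS$ via Lemma~\ref{lemm:pull-coeff}(b), and conclude by~(iii) on $\CS$. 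No serious obstacle is expected; the delicate point is simply that the injectivity of $f^*|_P$ is what prevents spurious index collisions on the bar side, which is precisely why that hypothesis is built into the statement.
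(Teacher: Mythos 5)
Your proof is correct and follows essentially the same route as the paper: both reduce to Proposition~\ref{prop:classic-basis} and match the two assembly maps $\theta$ and $\bar\theta$ via a commutative square with $f^*$ on the right, using Lemma~\ref{lemm:pull-coeff} to transport the $K_\varS$-alignments in each direction and the injectivity of $f^*|_P$ to identify index sets $\Set_p\isoto\bar\Set_{f^*p}$. The only cosmetic difference is that you always take $\bar L=f^*L$ and argue via characterization~(iii) on $\bar\CS$, whereas the paper also handles an arbitrary $\bar L$ with $[\bar L]\in f^*P$ by invoking Lemma~\ref{lemm:chase}\,\eqref{it:trivial} to produce $L$ with $f^*[L]=[\bar L]$ in $\Pic(\bar\CS)/2$; since (ii) and (iii) of Proposition~\ref{prop:classic-basis} are equivalent, both variants are valid.
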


\begin{proof}
We simply use Proposition~\ref{prop:classic-basis}, and its
notation, both for $\CS$ and for~$\bar \CS$, in the following
commutative diagram (for line bundles and alignments to be
specified)\,:
\begin{equation}
\label{eq:theta-pull}%
\vcenter{\xy 0;<1ex,0ex>:
(0,-15)*\xybox{};
(0,0)*\xybox{\xymatrix@R=5ex{\Wper^{k-j_\varS}(\BS,K_\varS) \ar[r]^-{\theta} \ar@{=}@<1em>[d]
& \Wcoh^k_{\!Z}(\CS,L) \ar[d]^-{\pull{f}{\bar B}}_-{\simeq}
\\
\Wper^{k-j_\varS}(\BS,K_\varS) \ar[r]^-{\bar \theta}
& \Wcoh^k_{\!f\inv Z}(\bar \CS,\bar L)
}};
(-12,-0.7)*\xybox{%
$\displaystyle\bigoplus_{\varS\in\Set_{[L]}}^{\vphantom{I^I}}$
};
(-12,-10)*\xybox{%
$\displaystyle\bigoplus_{\varS\in\Set_{[\bar L]}}$
};
\endxy}
\end{equation}
Let $k\in \bbZ$. Given a line bundle $L$ on $Y$, we can set $\bar L=f^*L$ and $\bar B=\id$. Conversely, given $\bar L$ on $\bar Y$ with  $[\bar L]\in f^*(P)\subset\Pic_\BS(\CS)/2$, Lemma~\ref{lemm:chase}~\eqref{it:trivial} provides an $L$ over $\CS$
with $[L]\in P$ such that $f^*[L]=[\bar L]$ in~$\Pic(\bar \CS)/2$ already. The latter allows us to choose $\bar B:f^*L\alto \bar L$ a
(plain) alignment over~$\bar \CS$, hence to use the lax
pull-back~$\pull{f}{\bar B}$ as on the right-hand side
of~\eqref{eq:theta-pull}.

Of course, every $\bar w_\varS$ is $\BS$-aligned with $f^*P$.
Furthermore, our assumption about the Picard-group $f^*$ being
injective on~$P$ implies that $w_\varS$ is $X$-aligned with $L$ if
and only if $\bar w_\varS$ is $X$-aligned with $\bar L$ (use part
\eqref{it:desc} of \ref{lemm:chase}), thus $f^*:\Set_{[L]}\isoto
\Set_{[\bar L]}$ is a bijection and the left hand side
of~\eqref{eq:theta-pull} also makes sense.

Now choose for every $\varS\in\Set_{[L]}$ a $K_\varS$-alignment
$C_\varS:L_\varS\alKto{K_\varS}L$, so that we can create
$\theta:\,(x_\varS)\mapsto
\sum_{\varS\in\Set}x_\varS\dt{C_\varS}w_\varS$
in~\eqref{eq:theta-pull} as we did in~\eqref{eq:iso}. By
Lemma~\ref{lemm:pull-coeff}\,\eqref{it:pull-easy}, there exists
$K_\varS$-alignments $\bar C_\varS:f^*L_\varS\alKto{K_\varS}\bar L$
over~$\bar \CS$ such that
$$
\pull{f}{\bar B}(x\dt{C_\varS}w_\varS)=x\dt{\bar C_\varS}\pull{f}{\bar A_\varS}(w_\varS)
$$
for all $x\in\Wper^*(\BS,K_\varS)$ and all $\varS\in\Set_{[L]}$. So
we can define $\bar \theta$ by $(x_\varS)\mapsto\sum x_\varS\dt{\bar
C_\varS}\bar w_\varS$, to make~\eqref{eq:theta-pull} commutative.
Consequently, $\theta$ and $\bar\theta$ are simultaneously
isomorphisms. By Proposition~\ref{prop:classic-basis},
$(w_\varS)_{\varS\in\Set}$ and $(\bar w_\varS)_{\varS\in\Set}$ are
simultaneously bases.
\end{proof}

\begin{coro}
\label{coro:A1}%
Hypotheses of Proposition~\ref{prop:pull-basis} hold when $f:\bar{\CS}
\to \CS$ is an affine bundle. So, in that case, a family is a total
basis over~$\BS$ of the $P$-part of the Witt groups of $\CS$ with
support in~$Z$, if and only if, it is pulled-back to a total basis
over~$\BS$ of the $f^*(P)$-part of the Witt groups of~$\bar{\CS}$ with
support in~$f\inv Z$. \qed
\end{coro}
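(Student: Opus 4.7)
The plan is straightforward: verify the two hypotheses of Proposition~\ref{prop:pull-basis} for an affine bundle $f:\bar{\CS}\to\CS$ and then invoke that proposition. The two hypotheses to check are, first, the injectivity of $f^*|_P : P \to \Pic_\BS(\bar{\CS})/2$, and second, that $f^* : \Wcoh^k_{\!Z}(\CS,L) \to \Wcoh^k_{f\inv Z}(\bar{\CS},f^*L)$ is an isomorphism for every line bundle $L$ with $[L]\in P$ and every $k\in\bbZ$.

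The second hypothesis is the classical homotopy invariance for triangular Witt groups of affine bundles in the regular noetherian setting over $\bbZ[\half]$; I would simply cite the corresponding result (essentially due to Gille), with arbitrary twist and support. This is the single piece of real geometric content making the corollary meaningful, and it is where I expect the only ``non-formal" input to lie.

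The first hypothesis is essentially bookkeeping. I would start from the homotopy invariance of Picard groups, which guarantees that $f^* : \Pic(\CS) \isoto \Pic(\bar{\CS})$ is an isomorphism for an affine bundle over a regular noetherian scheme. Applying the Five Lemma to the comparison, via $f^*$, of the two short exact sequences of Lemma~\ref{lemm:chase}\,\eqref{it:seq} for $\CS$ and for $\bar{\CS}$ (both of which are in $\SmPic_\BS$ by hypothesis, and both of whose leftmost term $\Pic(\BS)$ matches identically), I obtain an isomorphism $\Pic_\BS(\CS) \isoto \Pic_\BS(\bar{\CS})$. Reducing modulo~$2$ produces an isomorphism $\Pic_\BS(\CS)/2 \isoto \Pic_\BS(\bar{\CS})/2$, which is in particular injective on any subset~$P$, as required.

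With both hypotheses verified, Proposition~\ref{prop:pull-basis} applies directly and yields the corollary. No obstacle is anticipated beyond the citation of homotopy invariance of Witt groups; the Picard-theoretic half is purely formal manipulation of the exact sequences already developed in Lemma~\ref{lemm:chase}.
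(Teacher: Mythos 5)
Your proof is correct and is essentially the argument the paper intends (the paper marks the corollary with \qed and gives no explicit proof). You correctly identify the two inputs: homotopy invariance of triangular Witt groups with twist and support for affine bundles (the one piece of genuine geometric content), and homotopy invariance of Picard groups, from which injectivity of $f^*$ on $\Pic_\BS(\CS)/2$ — hence on any subset $P$ — is immediate; the Five Lemma via Lemma~\ref{lemm:chase}\,\eqref{it:seq} is slightly more elaborate than necessary (one can pass directly from $\Pic(\CS)\isoto\Pic(\bar\CS)$ to the cokernels modulo $\pi^*\Pic(\BS)$, then reduce mod~$2$), but it is correct.
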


\begin{coro}
\label{coro:lax-basis}%
For $\CS\in\SmPic_\BS$, the notions of total independence, total
generation, and total basis are stable under alignment isomorphisms
(Definition~\ref{defi:alis}). For instance, if $(w_\varS)_{\varS\in\Set}$ is a total basis of the $P$-part of the Witt group of $\CS$ with support in~$Z$,
over~$\BS$, the family $(\alis{A_\varS}(w_\varS))_{\varS\in\Set}$ is still such a basis for any family of alignment isomorphisms $(\alis{A_\varS})_{\varS\in\Set}$ (\eg\ multiplications by a unit of~$\CS$, see Example~\ref{ex:u}).
\end{coro}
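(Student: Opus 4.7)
The plan is to reduce to the characterization of total bases given by Proposition~\ref{prop:classic-basis}\,(iii) and to transport the alignment isomorphisms into the choice of compatible alignments. Note first that since $A_\varS:L_\varS\alto L_\varS'$ is an alignment, we have $[L_\varS']=[L_\varS]$ in $\Pic(\CS)/2$, hence in $\Pic_\BS(\CS)/2$ as well; so the condition ``$\BS$-aligned with~$P$'' is preserved, and the subsets $\Set_p$ indexing our families do not change.

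For the total basis statement, fix a class $p\in P$ and choose, by hypothesis and Proposition~\ref{prop:classic-basis}\,(iii), a line bundle $L\in p$, together with $K_\varS$-alignments $C_\varS:L_\varS\alKto{K_\varS}L$ for $\varS\in\Set_p$, making the map $\theta$ of~\eqref{eq:iso} an isomorphism. Using Lemma~\ref{lemm:groupoid}, pick an inverse alignment $A_\varS\inv:L_\varS'\alto L_\varS$ and set
$$
C_\varS':=C_\varS\circ\big(\id_{\pi_\CS^*K_\varS}\otimes A_\varS\inv\big)\,:\ L_\varS'\alKto{K_\varS}L\,.
$$
I would then apply Lemma~\ref{lemm:mult-al} with $A:=C_\varS$, $B:=\id_L$, $C:=\id_{K_\varS}$ and $D:=A_\varS\inv$, so that $E\simeq C_\varS'$; this yields the identity
$$
x_\varS\dt{C_\varS'}\alis{A_\varS}(w_\varS)=x_\varS\dt{C_\varS}\alis{A_\varS\inv}\big(\alis{A_\varS}(w_\varS)\big)=x_\varS\dt{C_\varS}w_\varS
$$
using Proposition~\ref{prop:al-comp}\,(b) and the fact that $\alis{(A_\varS\inv\circ A_\varS)}=\alis{\id_{L_\varS}}=\Id$. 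Summing over $\varS\in\Set_p$, the map $\theta'$ built from the new data $(L,C_\varS')$ for the family $(\alis{A_\varS}(w_\varS))$ coincides with the original~$\theta$, and is therefore an isomorphism. Proposition~\ref{prop:classic-basis}\,(iii)$\Rightarrow$(i) then shows that $(\alis{A_\varS}(w_\varS))_{\varS\in\Set}$ is a total basis.

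The same trick handles total generation and total independence separately, without invoking Proposition~\ref{prop:classic-basis}. Indeed, given any $(k,L)$-compatible coefficients $(\lambda_\varF)$ with alignments $D_\varF:L_\varF\alKto{K_\varF}L$ for a finite $\FSet\subset\Set$, replacing each $D_\varF$ by $D_\varF\circ(\id\otimes A_\varF\inv)$ produces $(k,L)$-compatible alignments for the $\alis{A_\varF}(w_\varF)$ with the same coefficients, and the corresponding lax linear combinations are equal by the computation above. This turns every expression $y=\sum\lambda_\varF\dt{D_\varF}w_\varF$ into $y=\sum\lambda_\varF\dt{D_\varF'}\alis{A_\varF}(w_\varF)$, giving total generation; and it turns every vanishing lax linear combination of the $\alis{A_\varF}(w_\varF)$ into a vanishing lax linear combination of the $w_\varF$ with the very same coefficients, giving total independence. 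The only mildly delicate point, and the one I would pay the most attention to, is making sure the hypothesis $\CS\in\SmPic_\BS$ is actually used where Lemma~\ref{lemm:mult-al} and Proposition~\ref{prop:classic-basis} require it, namely to guarantee that the auxiliary alignments $A_\varS\inv$ and $C_\varS'$ exist in the correct $\Alcat(\CS)$ and combine as expected. Everything else is bookkeeping.
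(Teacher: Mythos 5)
Your proof is correct, but it takes a noticeably different route from the paper's, which is a genuine one-liner: apply Proposition~\ref{prop:pull-basis} to $f=\id_\CS$. Indeed $\pull{\id}{\bar A_\varS}=\alis{\bar A_\varS}$ by Definition~\ref{defi:gen_pull}, and the hypotheses of Proposition~\ref{prop:pull-basis} (injectivity of $f^*|_P$ on $\Pic_\BS/2$, and $f^*$ an isomorphism on Witt groups) are vacuous for the identity. What you have done is essentially unfold that specialization: Proposition~\ref{prop:pull-basis} is itself proved by building two commuting $\theta$-maps via Proposition~\ref{prop:classic-basis} and realigning through Lemma~\ref{lemm:pull-coeff}; with $f=\id$ this collapses exactly to your precomposition $C_\varS':=C_\varS\circ(\id\otimes A_\varS\inv)$ and the identity $\theta'=\theta$. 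Your version buys a direct, self-contained treatment of total generation and total independence \emph{separately} via Lemma~\ref{lemm:mult-al}, which is worth having since Proposition~\ref{prop:pull-basis} as stated only speaks of total bases. One small correction to your closing worry: the existence of $A_\varS\inv$ and $C_\varS'$ needs only Lemma~\ref{lemm:groupoid} and never $\CS\in\SmPic_\BS$; Lemma~\ref{lemm:mult-al} also holds without that hypothesis, so your arguments for total generation and total independence alone in fact work for any $\BS$-scheme. The hypothesis $\CS\in\SmPic_\BS$ is used only when you invoke Proposition~\ref{prop:classic-basis} to characterize total bases (and is of course required by the paper's route through Proposition~\ref{prop:pull-basis}).
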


\begin{proof}
Apply Proposition~\ref{prop:pull-basis} to $f=\id_\CS$.
\end{proof}

\begin{prop} \label{prop:push-basis}
Let $f$ be a proper morphism of schemes in $\SmPic_\BS$ with
constant relative dimension~$d$. Let $P$ be a subset of
$\Pic_\BS(\CS)/2$ and $\bar Z\subset \bar \CS$. Let $f^!P:=[\can_f]
\cdot f^*P \subseteq \Pic_\BS(\bar{\CS})/2$. Suppose the
function $f^*_{|P}:P \to \Pic_\BS(\bar{\CS})/2$ injective.

Suppose also that for any line bundle $L$ such that $[L]\in P$, the
push-forward map $f_*:\Wcoh^{k+d}_{\!\bar Z}(\bar{\CS},\can_f
\otimes f^*L) \to \Wcoh^k_{\!f\bar Z}(\CS,L)$ is an isomorphism for
all~$k\in\bbZ$.

Then a family $(\bar w_\varS)_{\varS\in\Set}$ of Witt classes
on~$\bar \CS$ with support in~$\bar Z$, $\BS$-aligned with~$f^!P$\!,
is a total basis of the $f^!P$-part of the Witt group of~$\bar \CS$
with support in~$\bar Z$ if and only if the image family
$\big(\push{f}{\bar A_\varS}(\bar w_\varS)\big)_{\varS\in\Set}$
under any family of lax push-forwards corresponding to alignments
$(\bar A_\varS)_{\varS\in\Set}$ is a total basis of the $P$-part of
the Witt group of~$\CS$ with support in~$f(Z)$.
\end{prop}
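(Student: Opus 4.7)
The plan is to mirror the proof of Proposition~\ref{prop:pull-basis} almost verbatim, swapping lax pull-back for lax push-forward and invoking Lemma~\ref{lemm:push-coeff} in place of Lemma~\ref{lemm:pull-coeff}. As there, the strategy is to use the classical basis criterion of Proposition~\ref{prop:classic-basis} on both $\CS$ and~$\bar \CS$, and reduce the problem to the commutativity of a square relating the corresponding maps $\theta$ (on~$\CS$) and $\bar\theta$ (on~$\bar \CS$) via the push-forward isomorphism.

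First I would set up notation. Each given alignment $\bar A_\varS:\bar L_\varS\alto\can_f\otimes f^*L_\varS$ picks out a line bundle $L_\varS$ on~$\CS$, which by Lemma~\ref{lemm:chase}\eqref{it:trivial} may be chosen so that $[L_\varS]\in P$; then $w_\varS:=\push{f}{\bar A_\varS}(\bar w_\varS)\in\Wcoh^{j_\varS-d}_{\!f\bar Z}(\CS,L_\varS)$. Since tensoring with $\can_f$ is a bijection on~$\Pic(\bar \CS)/2$ (Lemma~\ref{lemm:invert}) and $f^*|_P$ is injective by hypothesis, for any $L$ on~$\CS$ with $[L]\in P$ and $\bar L:=\can_f\otimes f^*L$, the natural identification $\Set_{[L]}=\Set_{[\bar L]}$ holds.

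Given such $L$ and $\bar L$, and any choice of $K_\varS$-alignments $C_\varS:L_\varS\alKto{K_\varS}L$ over~$\CS$, I would apply Lemma~\ref{lemm:push-coeff}(a) with $\bar B=\id_{\bar L}$ (so $\push{f}{\bar B}=f_*$) to produce $K_\varS$-alignments $\bar C_\varS:\bar L_\varS\alKto{K_\varS}\bar L$ on~$\bar \CS$ satisfying $\lambda\dt{C_\varS}w_\varS=f_*(\lambda\dt{\bar C_\varS}\bar w_\varS)$. This yields the commutative square

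$$
\xymatrix@R=2em{
\bigoplus_{\varS\in\Set_{[\bar L]}}\Wper^{k+d-j_\varS}(\BS,K_\varS)
 \ar[r]^-{\bar\theta}\ar@{=}[d]
& \Wcoh^{k+d}_{\!\bar Z}(\bar \CS,\bar L)\ar[d]^-{f_*}_-{\simeq}
\\
\bigoplus_{\varS\in\Set_{[L]}}\Wper^{k+d-j_\varS}(\BS,K_\varS)
 \ar[r]^-{\theta}
& \Wcoh^k_{\!f\bar Z}(\CS,L)
}
$$

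whose right vertical arrow is an isomorphism by hypothesis. For the symmetric direction needed to apply Proposition~\ref{prop:classic-basis}(iii) on~$\bar \CS$, one starts from an arbitrary $\bar L$ on~$\bar \CS$ with $[\bar L]\in f^!P$, uses Lemma~\ref{lemm:chase}\eqref{it:trivial} to produce a matching $L$ on~$\CS$ and a plain alignment $\bar B:\bar L\alto \can_f\otimes f^*L$, and invokes Lemma~\ref{lemm:push-coeff}(b) instead, with $\push{f}{\bar B}=f_*\circ\alis{\bar B}$ replacing $f_*$ on the right.

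From the square, $\theta$ is an isomorphism if and only if $\bar\theta$ is, so by Proposition~\ref{prop:classic-basis} the families $(w_\varS)$ and $(\bar w_\varS)$ are simultaneously total bases. The main technical obstacle is the intertwining of the alignments $C_\varS$ and $\bar C_\varS$ through the push-forward, which is exactly what Lemma~\ref{lemm:push-coeff} provides; the rest is bookkeeping of the $\can_f$-twist and the degree shift by~$d$.
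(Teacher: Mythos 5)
Your proposal is correct and matches the paper's approach: the paper itself proves this proposition simply by saying it is ``similar to that of Proposition~\ref{prop:pull-basis}, mutatis mutandis,'' comparing the two $\theta$-homomorphisms via Lemma~\ref{lemm:push-coeff}, which is exactly what you have written out in detail. The only cosmetic remark is that, as in the proof of Proposition~\ref{prop:pull-basis}, part~(a) of Lemma~\ref{lemm:push-coeff} already suffices for both implications (since criterion~(iii) of Proposition~\ref{prop:classic-basis} only requires exhibiting one good choice of $\bar L$ and alignments), so your appeal to part~(b) is harmless but not strictly needed.
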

\begin{proof}
The proof is similar to that of Proposition~\ref{prop:pull-basis},
mutatis mutandis. One compares two $\theta$ homomorphisms that are
``arranged" via Lemma~\ref{lemm:push-coeff} this time.
\end{proof}

In particular, the d\'evissage isomorphism for Witt groups yields the following.
\begin{coro}
\label{coro:push}%
Let $\iota:Z\hook \CS$ be a closed immersion of constant
codimension, with $Z$ and $\CS$ in~$\SmPic_\BS$. Let $P$ be a subset
of $\Pic_\BS(\CS)/2$ such that the map of sets $\iota^*_{|P}:P \to
\Pic_\BS(Z)/2$ is injective. Let $\iota^!P=[\can_\iota]
\cdot\iota^*P \subseteq \Pic_\BS(Z)/2$. Let
$(w_\varS)_{\varS\in\Set}$ be elements of the $P$-part of the total
Witt groups of $\CS$ with support in $Z$, and for each
$\varS\in\Set$, let $v_\varS$ be a Witt class in the $\iota^!P$-part
of the Witt groups of $Z$ over $\BS$ such that
$w_\varS=\iota_*(v_\varS)$ (this is always possible by d\'evissage).
The family $(v_\varS)_{\varS\in\Set}$ is a total basis of the
$\iota^!P$-part of the Witt groups of $Z$, over~$\BS$, if and only
if the family $(w_\varS)_{\varS\in\Set}$ is a total basis of the
$P$-part of the Witt groups of\; $\CS$ with support in $Z$,
over~$\BS$.
\end{coro}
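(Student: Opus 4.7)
The corollary is a direct specialization of Proposition~\ref{prop:push-basis} to the closed immersion $f=\iota$. Indeed, $\iota$ is proper and of constant relative dimension $d=-c$, with relative canonical bundle~$\can_\iota$, and by assumption $\iota:Z\hook\CS$ is a morphism in $\SmPic_\BS$. The injectivity of $\iota^*|_P$ and the definition $\iota^!P=[\can_\iota]\cdot\iota^*P$ are built into the statement. The only remaining hypothesis of Proposition~\ref{prop:push-basis} is that
$$
\iota_*:\Wcoh^{k-c}(Z,\can_\iota\otimes\iota^*L)\isotoo\Wcoh^k_{\!Z}(\CS,L)
$$
be an isomorphism for every $k\in\bbZ$ and every $L$ with $[L]\in P$, and this is precisely the d\'evissage isomorphism for triangular Witt groups along a regular closed immersion between regular schemes.

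For each $\varS\in\Set$, d\'evissage furnishes a unique preimage $v_\varS\in \Wcoh^{j_\varS-c}(Z,\can_\iota\otimes\iota^*L_\varS)$ of $w_\varS$, whose twist lies on the nose in the class $\iota^!P\in\Pic_\BS(Z)/2$. One may therefore take the trivial alignment $\bar A_\varS=\id_{\can_\iota\otimes\iota^*L_\varS}$ in the statement of Proposition~\ref{prop:push-basis}; with this choice $\push{\iota}{\bar A_\varS}(v_\varS)=\iota_*(v_\varS)=w_\varS$, and Proposition~\ref{prop:push-basis} yields the claimed equivalence at once. Should one wish to start from a $v_\varS$ whose twist is only $\BS$-aligned with (rather than equal to) $\can_\iota\otimes\iota^*L_\varS$, one picks any alignment $\bar A_\varS:\bar L_\varS\alto \can_\iota\otimes\iota^*L_\varS$, observes that $\push{\iota}{\bar A_\varS}(v_\varS)=\iota_*(\alis{\bar A_\varS}(v_\varS))$ is lax-similar to $w_\varS$, and invokes Corollary~\ref{coro:lax-basis} (stability of total bases under alignment isomorphisms) to transfer the basis property.

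I do not foresee any real obstacle here: the argument is a pure unpacking of definitions once Proposition~\ref{prop:push-basis} is in hand, and the only substantive input is d\'evissage, which is used both to produce $v_\varS$ from $w_\varS$ in the statement and to verify the push-forward isomorphism hypothesis of Proposition~\ref{prop:push-basis}. The mildest point of attention is the reconciliation between the bare $\iota_*$ appearing in the statement and the lax push-forward $\push{\iota}{\bar A_\varS}$ on which Proposition~\ref{prop:push-basis} is phrased, but this is handled by the trivial or arbitrary alignment choice described above, together with Corollary~\ref{coro:lax-basis}.
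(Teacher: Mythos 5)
Your proposal is correct and is precisely the argument the paper leaves implicit: the corollary is a specialization of Proposition~\ref{prop:push-basis} to $f=\iota$, with the d\'evissage isomorphism supplying the hypothesis that $\iota_*$ is an isomorphism on Witt groups, and the trivial alignment $\bar A_\varS=\id$ reconciling the bare $\iota_*$ of the statement with the lax push-forward of the proposition.
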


\begin{rema} \label{rema:chunk}
The injectivity condition on $f^*:P \to \Pic_\BS(\bar{\CS})/2$ is
not really harmful because one can always split $\Pic_\BS(\CS)/2$ in
smaller $P$-chunks to ensure that the condition holds for each of
them, and then use Lemma~\ref{lemm:union} to obtain a total basis
for the whole $P=\Pic_\BS(\CS)/2$. This happens in ``fringe" cases,
in the cellular decomposition of the Grassmannians, for instance,
see~\cite{Balmer07_pre}.
\end{rema}

\medskip
\section{Total bases in the localization long exact sequence}
\label{se:LES}%
\medskip

Let $U$ be the open complement of
a closed subset $Z\subset \CS$, and let $\upsilon: U \hook \CS$ be the
corresponding open embedding. Assume both $\CS$ and $U$ are in~$\SmPic_\BS$. 
Let $\ext: \Wcoh^*_{\!Z}(\CS,L) \to \Wcoh^*(\CS,L)$ be the extension of support map. Recall that in this
situation, there is a long exact sequence of localization
$$
\cdots \too \Wcoh^i_{\!Z}(\CS,L) \tooby{\ext} \Wcoh^i(\CS,L)
\tooby{\upsilon^*} \Wcoh^i(U,\upsilon^*L)\tooby{\bord}
\Wcoh^{i+1}_{\!Z}(\CS,L) \to \cdots
$$

\begin{theo}
\label{thm:localization}%
Let $P$ be a subset of $\Pic_\BS(\CS)/2$. Assume that the restriction
$\upsilon^*_{|P}:P \to \Pic_\BS(U)/2$ is injective and let
$P_U=\upsilon^*(P) \subset \Pic_\BS(U)/2$.

Let $\SetZ$, $\SetX$ and $\SetU$ be sets and let $(w'_i)_{\varSZ\in
\SetZ}$ and $(w_\varSX)_{\varSX\in \SetX}$ be elements in Witt
groups of\,~$\CS$, let $(v_\varSZ)_{\varSZ\in \SetZ}$ and
$(v'_\varSU)_{\varSU\in \SetU}$ be elements in Witt groups
of\,~$\CS$ with support in $Z$ and let $(u_\varSU)_{\varSU\in
\SetU}$ and $(u'_\varSX)_{\varSX\in \SetX}$ be elements in Witt
groups of\,~$U$, whose line bundles are restricted from $\CS$. (Recall lax-similitude $\weq$ from
Definition~\ref{defi:weq}.) Suppose the following conditions hold
(see Figure~\ref{fig:uvw})\,:
\begin{enumerate}[\indent(a)]
\item \label{eweq_item} for every $\varSZ\in \SetZ$, we have $\ext(v_\varSZ)\weq w'_\varSZ$
\item \label{upsilonweq_item} for every $\varSX\in \SetX$, we have $\upsilon^*(w_\varSX)\weq u'_\varSX$
\item \label{bordweq_item} for every $\varSU\in \SetU$, we have $\bord(u_\varSU)\weq v'_\varSU$.
\end{enumerate}
Then, the following properties are satisfied\,:
\begin{enumerate}[\indent(1)]
\item \label{upsilonai_item} for every $\varSZ\in \SetZ$, we have $\upsilon^*(w'_\varSZ)=0$;
\item \label{bordci_item} for every $\varSX\in \SetX$, we have $\bord(u'_\varSX)=0$;
\item \label{ebi_item} for every $\varSU\in \SetU$, we have $\ext(v'_\varSU)=0$.
\item \label{basisinduction_item} If, furthermore, out of the three following statements:
\begin{enumerate}[\indent(i)]
\item the $(v_\varSZ)_{\varSZ\in \SetZ}$ and $(v'_\varSU)_{\varSU\in \SetU}$ form a total basis of the $P$-part of the Witt groups of\,~$\CS$ with support in $Z$, over~$\BS$,
\item the $(w'_\varSZ)_{\varSZ\in \SetZ}$ and $(w_\varSX)_{\varSX\in \SetX}$ form a total basis of the $P$-part of the Witt groups of\,~$\CS$, over~$\BS$,
\item the $(u_\varSU)_{\varSU\in \SetU}$ and $(u'_\varSX)_{\varSX\in \SetX}$ form a total basis of the $P_U$-part of the Witt groups of\,~$U$, over~$\BS$,
\end{enumerate}
two are true, then the remaining one is also true.
\end{enumerate}
\begin{figure}[h!]
\begin{tikzpicture}[>=stealth,shorten >=1pt,shorten <=1pt,text height=1.5ex,text depth=.25ex,on grid,semithick,mynode/.style={circle,fill=black!20,very thick,draw=black!80,text=black}]
\matrix[row sep=5ex,column sep=10ex] { \node (b)  [mynode] {$v_\varSZ$};
& \node (a)  [mynode] {$w'_\varSZ$}; &
\node (c)  [mynode] {$u_\varSU$}; \\
\coordinate (x);  & \coordinate (y); & \coordinate (z); \\
\node (b') [mynode] {$v'_\varSU$}; & \node (w') [mynode] {$w_\varSX$}; &
\node (c') [mynode] {$u'_\varSX$}; \\
};
\path (b)++(0.75,0.75) coordinate (b+);
\path (a)++(0.75,0.75) coordinate (a+);
\path (c)++(0.75,0.75) coordinate (c+);
\begin{pgfonlayer}{background}
\draw[fill=black!10,fill opacity=0.5,rounded corners]
(b')+(-0.75,-0.75) rectangle (b+); \draw[fill=black!10,fill
opacity=0.5,rounded corners] (w')+(-0.75,-0.75) rectangle (a+);
\draw[fill=black!10,fill opacity=0.5,rounded corners]
(c')+(-0.75,-0.75) rectangle (c+);
\end{pgfonlayer}
\draw[very thick,->] (b) -- node[above]{$e$} (a); \draw[very
thick,->] (w') -- node[above]{$\upsilon^*$}(c'); \path
(x)++(-0.75,0) coordinate (x-); \path (x)++(0.75,0) coordinate (x+);
\path (y)++(-0.75,0) coordinate (y-); \path (y)++(0.75,0) coordinate
(y+); \path (z)++(-0.75,0) coordinate (z-); \path (z)++(0.75,0)
coordinate (z+); \draw[very thick] (c) .. controls +(3,0) and +(1,0)
.. node[above right] {$\bord$} (z+); \draw[very thick,dashed] (z+)
-- (z-); \draw[very thick] (z-) -- (y+); \draw[very thick,dashed]
(y+) -- (y-); \draw[very thick] (y-) -- (x+); \draw[very
thick,dashed] (x+) -- (x-); \draw[very thick,->] (x-) .. controls
+(-1.5,0) and +(-1.5,0) .. (b'); \draw (b')+(0,-1) node (pz)
{$P$-part}; \draw (pz)+(0,-2.5ex) node {of $\Wcoh_{\!Z}(\CS)$};
\draw (w')+(0,-1) node (px) {$P$-part}; \draw (px)+(0,-2.5ex) node
{of $\Wcoh(\CS)$}; \draw (c')+(0,-1) node (pu) {$P_U$-part}; \draw
(pu)+(0,-2.5ex) node {of $\Wcoh(U)$};
\end{tikzpicture}
\caption{\label{fig:uvw}%
Families mapping to each other up to lax-similitude in
Theorem~\ref{thm:localization}. No arrow means mapped to zero.}
\end{figure}
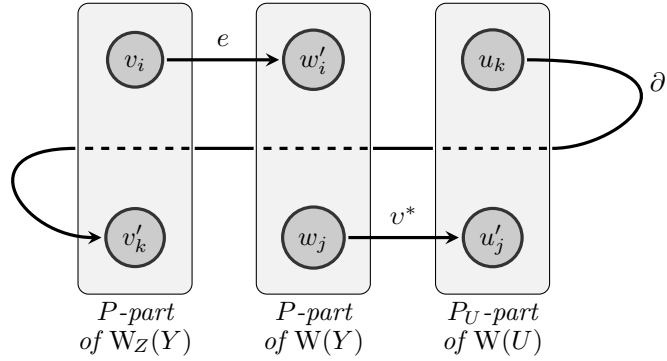
\end{theo}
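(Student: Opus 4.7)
My plan is to handle items~\eqref{upsilonai_item}--\eqref{ebi_item} as a quick consequence of Remark~\ref{rema:al-les} on the compatibility of alignment isomorphisms with the three maps in the localization long exact sequence, and to prove the ``two out of three'' claim~\eqref{basisinduction_item} by translating the total basis properties into isomorphism statements via Proposition~\ref{prop:classic-basis}, then running a five-lemma argument on a suitable commutative ladder.

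For~\eqref{upsilonai_item}, note that the hypothesis $\ext(v_\varSZ)\weq w'_\varSZ$ means $w'_\varSZ=\alis{A}(\ext(v_\varSZ))$ for some alignment~$A$; applying $\upsilon^*$ and combining Corollary~\ref{coro:reasy} with the exactness relation $\upsilon^*\circ\ext=0$ gives $\upsilon^*(w'_\varSZ)=0$. Items~\eqref{bordci_item} and~\eqref{ebi_item} are treated identically, invoking the compatibility of $\bord$ and $\ext$ with alignment isomorphisms from Remark~\ref{rema:al-les}.

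For the core claim~\eqref{basisinduction_item}, I fix a line bundle $L$ on~$\CS$ with $[L]\in P$ and an integer~$k$. By injectivity of $\upsilon^*|_P$ and Lemma~\ref{lemm:chase}\,\eqref{it:desc}, the index subsets $\SetZ_{[L]}, \SetX_{[L]}, \SetU_{[L]}$ coincide with their $U$-analogs. Choosing compatible alignments and invoking Proposition~\ref{prop:classic-basis}, I obtain three homomorphisms $\theta_Z$, $\theta_\CS$, $\theta_U$ landing in $\Wcoh^k_{\!Z}(\CS,L)$, $\Wcoh^k(\CS,L)$, $\Wcoh^k(U,\upsilon^*L)$ respectively, such that conditions (i), (ii), (iii) of the theorem become equivalent (as $L$ and $k$ vary) to $\theta_Z$, $\theta_\CS$, $\theta_U$ being isomorphisms.

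Next, I plan to fit these three $\theta$'s as the vertical maps of a ladder whose bottom row is the localization long exact sequence for~$L$ and whose top row is a direct sum of Witt groups of~$\BS$ split according to the index sets $\SetZ_{[L]}$, $\SetX_{[L]}$, $\SetU_{[L]}$. The hypotheses~\eqref{eweq_item}--\eqref{bordweq_item} dictate the top-row maps: on each index block shared between source and target, the map is an alignment-induced identification of $\Wper(\BS,\cdot)$-summands; on the other blocks it vanishes. Such a top row is exact by inspection. The main obstacle is to make the ladder strictly commute: this requires coordinated choices of the alignments defining $\theta_Z$, $\theta_\CS$, $\theta_U$, using Lemma~\ref{lemm:pull-coeff} for the $\upsilon^*$-square, Lemma~\ref{lemm:push-coeff} for the $\bord$-square (since $\bord$ is lax-linear, cf.\ Remark~\ref{rema:al-les}), the evident analog for the $\ext$-square, and Lemma~\ref{lemm:move-coeff} for internal realignments on~$\BS$. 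Once commutativity is established, the five-lemma applied to this ladder of long exact sequences yields the ``two out of three'' conclusion, which translates back through Proposition~\ref{prop:classic-basis} to item~\eqref{basisinduction_item}.
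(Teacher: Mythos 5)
Your proposal takes essentially the same route as the paper: dispatch items~\eqref{upsilonai_item}--\eqref{ebi_item} via Remark~\ref{rema:al-les}, then for item~\eqref{basisinduction_item} translate total bases into the $\theta$-isomorphisms of Proposition~\ref{prop:classic-basis}, build a commutative ladder with the localization long exact sequence on the bottom and a split-exact sequence of Witt groups over $\BS$ on top, and invoke the Five Lemma.

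Two small remarks on where you deviate. First, the paper begins by replacing $w'_\varSZ$, $u'_\varSX$, $v'_\varSU$ up to lax-similitude so that the relations~\eqref{eweq_item}--\eqref{bordweq_item} become genuine equalities (justified by Corollary~\ref{coro:lax-basis}); this makes the top-row maps of the ladder literal block-identities and trivially exact, whereas you carry alignment identifications through the whole argument, which is correct but slightly heavier. Second, you cite Lemma~\ref{lemm:push-coeff} to handle the $\bord$-square, but that lemma is about push-forwards along proper morphisms; $\bord$ is the connecting homomorphism and is not a push-forward, so the lemma as stated does not apply to it. What you actually need is the compatibility of the lax module structure with $\bord$, which follows from the $\Wper^*(\CS)$-linearity of $\bord$ and is precisely the content of Remark~\ref{rema:mock-les} (the reference the paper itself uses). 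The underlying idea is sound; only the citation is off.
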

\begin{proof}
Parts~\eqref{upsilonai_item}--\eqref{ebi_item} follow immediately
from Remark~\ref{rema:al-les}. The proof of
\eqref{basisinduction_item} goes through as for classical modules
over a ring. Choose a class $p\in P$ over~$\CS$, which by hypothesis
is the same thing as choosing its image $f^*(p)\in f^* P$, \ie a
class in $f^*P$ over~$U$. Up to replacing the $u'_\varSX$,
$v'_\varSU$ and $w'_\varSZ$ up to lax-similitude, which does not
change their total-basis qualities by
Corollary~\ref{coro:lax-basis}, we can assume that
relations~\eqref{eweq_item}--\eqref{bordweq_item} are equalities.
Now, choosing $\BS$-alignments as in
Proposition~\ref{prop:classic-basis} for those $u_\varSU$,
$u'_\varSX$, $v_\varSZ$, $v'_\varSU$, $w_\varSX$ and $w'_\varSZ$
which are $\BS$-aligned with~$L$, one can define three homomorphisms
$\theta$ as in that proposition. One can construct a split exact
sequence at the level of Witt groups of~$\BS$, and compare it with
the localization long exact sequence for~$\CS$ and~$L$ via the
various~$\theta$. This is a morphism of long exact sequences by the
compatibilities of the lax product structure with extension of
support, restriction to an open and connecting homomorphism, see
Remark~\ref{rema:mock-les}. The Five Lemma then gives the result. We
leave the details to the reader.
\end{proof}

\begin{rema}
The same theorem holds with support, \ie replacing $\Wcoh(\CS)$ by
$\Wcoh_{\!Z'}(\CS)$, and consequently $\Wcoh_{\!Z}(\CS)$ by
$\Wcoh_{\!Z\cap Z'}(\CS)$ and $\Wcoh(U)$ by $\Wcoh_{\!Z'\cap U}(U)$.
\end{rema}

\begin{rema} \label{slicePic_rema}
The benefit of this theorem, together with the one on d\'evissage, is
that we can build a total basis on $\CS$ out of smaller ones on $\CS$
with support in $Z$ and on $U$. As in Remark~\ref{rema:chunk} and
for the same reasons, the injectivity assumption on $P \to
\Pic_\BS(U)/2$ is not really restrictive in actual computations.
\end{rema}

\begin{rema} \label{coherent_rema}
All this ``total'' formalism still holds in the non-necessarily
regular case with the following modifications. All schemes should be
noetherian and separated. The category $\SmPic_\BS$ should be
replaced by the category of $\BS$-schemes $\CS$ that have a
dualizing complex (not necessarily injectively bounded), with the
conditions on Picard groups left unchanged\,: the important point is
that two dualizing complexes always differ by tensoring by a line
bundle (and a shift). The Witt groups considered for such schemes
$\CS$ should be the {\rm coherent} Witt groups, and the formalism
will mimic how they behave as a (total) module over the {\em locally
free} Witt groups of $\BS$. In particular, the $\BS$-alignment of
definition~\ref{defi:X-aligned} should be replaced by an isomorphism
$\phi: M^{\otimes 2} \otimes \pi_\CS^*N \otimes K' \isoto K$ where
$M$ and $N$ are line bundles, as before, but $K$ and $K'$ are
dualizing complexes. Pull-backs of coherent Witt groups should only
be considered along flat morphisms preserving dualizing complexes
(\eg\ open embeddings in the localization sequence). Morphisms
involving pull-backs of locally free Witt groups can be considered
without restriction (\eg\ pull-backs from the base $\BS$).
\end{rema}

\bibliography{total_Witt}

\end{document}